\newtheorem{thm}{Theorem}[section]
\newtheorem{lemma}[thm]{Lemma}
\newtheorem{prop}[thm]{Proposition}
\newtheorem{conj}[thm]{Conjecture}
\newtheorem{remark}[thm]{Remark}
\def\P{\mathbb{P}}
\def\R{\mathbb{R}}
\def\Z{\mathbb{Z}}
\def\E{\mathbb{E}}
\def\F{\mathcal{F}}
\def\G{\mathcal{G}}
\def\d{\delta}
\numberwithin{equation}{section}
\begin{document}

\title{Forest Fire Model on $\Z_{+}$ with Delays}
\author{Satyaki Bhattacharya, Stanislav Volkov}
\maketitle
\begin{abstract}We consider a generalization of the forest fire model on $\Z_+$ with ignition at zero only, studied in~\cite{Stas}. Unlike that model, we allow delays in the spread of the fires and the non-zero burning time of individual ``trees''. We obtain some general properties for this model, which cover, among others, the phenomena of an ``infinite fire'', not present in the original model. 
\end{abstract}

\noindent
{\bf Mathematics Subject Classification}: primary 60G55, 60K35; secondary 60F20

\noindent
{\bf Keywords and phrases}: forest fire model

\section{Introduction}
The following forest fire model was introduced in \cite{BT,Stas}; an extension of this model was also studied in~\cite{Comets}. Each node of $\Z_+$ can be vacant or occupied by a tree. If a node is vacant, and only in this case, a single tree appears at this node after an exponentially distributed time with a rate $1$; this time is independent of everything else. A constant source of fire is located at node $0$, and whenever a tree catches fire, the fire spreads instantly to the next tree on the right and continues doing so until the whole cluster is burnt, which takes zero time. The fire stops only when it reaches a vacant node (i.e.\ without a tree). 

Various forest fire models have been abundant, especially in the physics literature, partly for their practical applications and partly for the fact that they exhibit so-called {\em self-organized criticality}, which is a property of many dynamical systems that naturally evolve into a critical state, and where small changes can, surprisingly, trigger large-scale events. This phenomenon can be useful for explaining why complex systems (e.g., earthquakes, avalanches, stock markets, etc.) exhibit power-law behaviour without the necessity of external fine-tuning. Self-organized criticality is considered as a mechanism by which complexity arises in nature; for a more detailed analysis of this notion, we refer the reader to~\cite{Bak}. For examples of forest-fire models in the physics literature see e.g.~\cite{Drossel1}, where the authors compute the average number of trees destroyed by lightning, derive scaling laws and calculate all critical exponents; in~\cite{Drossel2}  the analytic solution of the self-organized critical forest-fire model in one dimension is given.  There are some interesting relations between percolation theory and forest fire models: in~\cite{vdB}, the authors study the connection between frozen percolation and forest fire processes without recovery; in~\cite{Ahlberg} it is shown that in higher dimensions a similar model introduced in~\cite{BB1} will eventually recover from fires. An extensive and detailed review of the forest models, including some practical ones, can be found in~\cite{Bressaud}. For other and more recent relevant models, please see~\cite{Ahlberg,BJ,BB,Bressaud,Comets,CFT,Kiss} and references therein.

In the current paper we consider a generalization of the model from~\cite{Stas} by allowing non-zero tree burning times, as well as by introducing possible delays, which affect how the fire spreads from a burning site to its right neighbour. Neither of these possibilities was considered in~\cite{Comets} or~\cite{Stas}, which motivated us to write the current paper. In our model, each site can be in three possible states: (a) vacant (no tree); (b) having a (non-burning) tree; (c) having a burning tree. Formally, assume that all sites are vacant initially and
\begin{itemize}
\item[(1)] the trees appear at each vacant, non-burning site $x\in\Z_+$ at a rate $1$ independently of anything, after which they can be burned by a fire arriving from the left, which will eventually return the site to the original {\em vacant} state;

\item[(2)] there is a constant source of fire attached to site $x=0$, so whenever a tree appears there, it starts burning immediately;

\item[(3)] when a tree at position $x$ catches fire for the $i$-th time ($x,i\in\Z_+$), denote this time as $f_{x,i}$, it takes $\theta_{x,i}\ge 0$ time to burn, where $\theta_{x,i}$  are drawn independently from some non-negative distribution $\theta$, so the state $x$ is ``burning" during the time interval $[f_{x,i},f_{x,i}+\theta_{x,i}]$;

\item[(4)] when a tree at position $x$ burns for the $i$-th time, the fire may {\bf spread} to tree $(x+1)$ in time $\Delta_{x,i}\ge 0$, where $\Delta_{x,i}$ are drawn independently from some non-negative distribution~$\Delta$, and the counting starts only after both the fire starts at $x$ and the tree at $(x+1)$ appears. Therefore, if the fire starts at position $x$ at time $t:=f_{x,i}$, then the fire at position $(x+1)$ starts at time $f_{x+1,j}=t+\Delta_{x,i}$ if there is already a tree\footnote{which is {\em not burning} at this time} at $(x+1)$ by time $t+\Delta_{x,i}$. On the other hand, if the site $x+1$ is vacant at time $t+\Delta_{x,i}$, assuming that the tree at $x+1$ appears at time $t+\Delta_{x,i}+\nu$ for some $\nu>0$, it will either start burning immediately at time $f_{x+1,j}=t+\Delta_{x,i}+\nu$ if $\nu<\theta_{x,i}$, or if $\nu\ge \theta_{x,i}$ the fire will not propagate beyond site $x$; we include the possibility that the fire at position $x$ goes off before the position $x+1$ catches fire\footnote{in real life, this can be interpreted as grassland between trees spreading fire, or that a burning tree emits sparks which are spread by the wind until they reach the next tree. Thus, the next tree might start burning after the tree on its left had already burnt};
   
\item[(5)] while a tree is burning, another tree cannot grow at the same site, nor can the fire from the left neighbour of this site spread to it. So the next tree at that spot can appear only after it was burnt completely and became vacant again, and it will take again $\exp(1)$ time independently of anything.
\end{itemize}

Observe the following:
\begin{itemize}
\item we may have more than one fire running in the model at the same time (but in different locations);
\item if the $\Delta$s are truly random, it is conceivable that while site $x$ has consecutive fires at times~$f_{x,i}$ and~$f_{x,i+1}$, respectively (such that $f_{x,i+1}>f_{x,i}+\theta_{x,i}$),  the time $\tilde t=f_{x,i}+\theta_{x,i}+\Delta_{x,i}$ by which the $i$-th fire at $x$ {\em stops affecting} site $x+1$ exceed the time $f_{x,i+1}+\Delta_{x,i+1}$ by which the next $(i+1)$-st fire at $x$ {\em starts affecting} $x+1$. In this case, we postulate that the  $(i+1)$-st fire at $x$ can ignite site $x+1$ {\em only after} the time $\tilde t$;
\item it is conceivable that a fire never stops but continues to infinity; we will call this phenomenon {\em an infinite fire}.
\end{itemize}

Now we introduce some notations. Let $\eta_x(t)\in\{0, 1\}$ be the state of the site $x \in \Z_+$ at time $t\ge 0$, and we say that the site $x$ is vacant (occupied, respectively) if $\eta_x = 0$ ($\eta_x = 1$ respectively). A vacant site $x\in\Z_+$ becomes occupied at rate $1$; once $x$ is occupied, it becomes vacant again after it catches fire that spreads from the site $x-1$ and stops burning after a time distributed as $\theta$. We assume that the process $\eta_x(t)$ is right-continuous.

Let $\nu_k$ be the time at which the $k$-th tree appears at the origin (and instantaneously begins burning); if $\theta\equiv0$, the times $\nu_1,\nu_2,\dots$ form a Poisson point process with rate $1$, otherwise
\begin{align}\label{eq:nus}
\nu_k=(\xi_1+\theta_{0,1})+(\xi_2+\theta_{0,2})+\dots+(\xi_{k-1}+\theta_{0,k-1})+\xi_k    
\end{align}
where $\xi_k$ are i.i.d.\ $\exp(1)$ random variables.

Assume for the moment that $\theta\equiv 0$.
Let $n_k$ denote the right-most point burnt by fire number~$k$; this fire will eventually spread to the site $n_k$  which will happen at time $\nu_k+\zeta_k$  where $\zeta_k=\Delta_0'+\Delta_1'+\dots+\Delta_{n_k-1}'$ with $\Delta'$s being i.i.d.\ copies of $\Delta$. We can interpret $\zeta_k$ as the duration of the $k$-th fire. Then
\begin{align*}  
\eta_1(\nu_k+\Delta_0'-)=
\eta_2(\nu_k+\Delta_0'+\Delta_1'-)=
\dots
=\eta_{n_k}(\nu_k+\Delta_0'+\Delta_1'+\dots+\Delta_{n_k-1}'-)&=1;\\
\text{but}\quad \eta_{n_k+1}(\nu_k+\Delta_0'+\Delta_1'+\dots+\Delta_{n_k}'-)&=0;
\end{align*}

If $n_k=\infty$ and $\zeta_k=\infty$, we shall call the $k$-th fire {\em an infinite fire}. Finally, we can similarly define $n_k$ and the duration of the $k$-th fire in the case when $\theta$ is not identically equal to zero, but we will not get such simple formulae.

\subsection{Graphic representation}
We can represent our model in the upper right quadrant of $\R^2$ as follows. For each $x\in\{0,1,2,\dots\}$ consider a vertical ray $(x,t)$, $t\ge 0$.  Suppose a tree appears at site $x$ for the $i$-th time at time $t$, and starts burning at time $t'\ge t$. Then colour the segment with the vertical coordinates $[t,t']$ \textcolor{Green}{green}. The tree will burn between times $t'$ and $t'+\theta_{x,i}$ so colour the interval $(t',t'+\theta_{x,i})$ \textcolor{Red}{red}. The rest of the line remains uncoloured. As a result, consecutive segments on this ray alternate: the uncoloured segments are followed by green and then by red segments (it is conceivable that a green segment degenerates to a single point if the fire at $x$ starts immediately after the tree is planted there; this can happen if $\theta_{x-1,j}>0$. Please see Figure~\ref{fig1} for the case when the duration of fire is always zero, and 
Figure~\ref{fig2} for the general case. Interestingly, the graphic representation of the process is somewhat reminiscent of direct percolation models, though we could not find a direct connection which would be useful in obtaining additional insights.

\begin{figure}[!htb]
    \centering
    \begin{minipage}{.45\textwidth}
        \centering
        \includegraphics[width=0.8\textwidth]{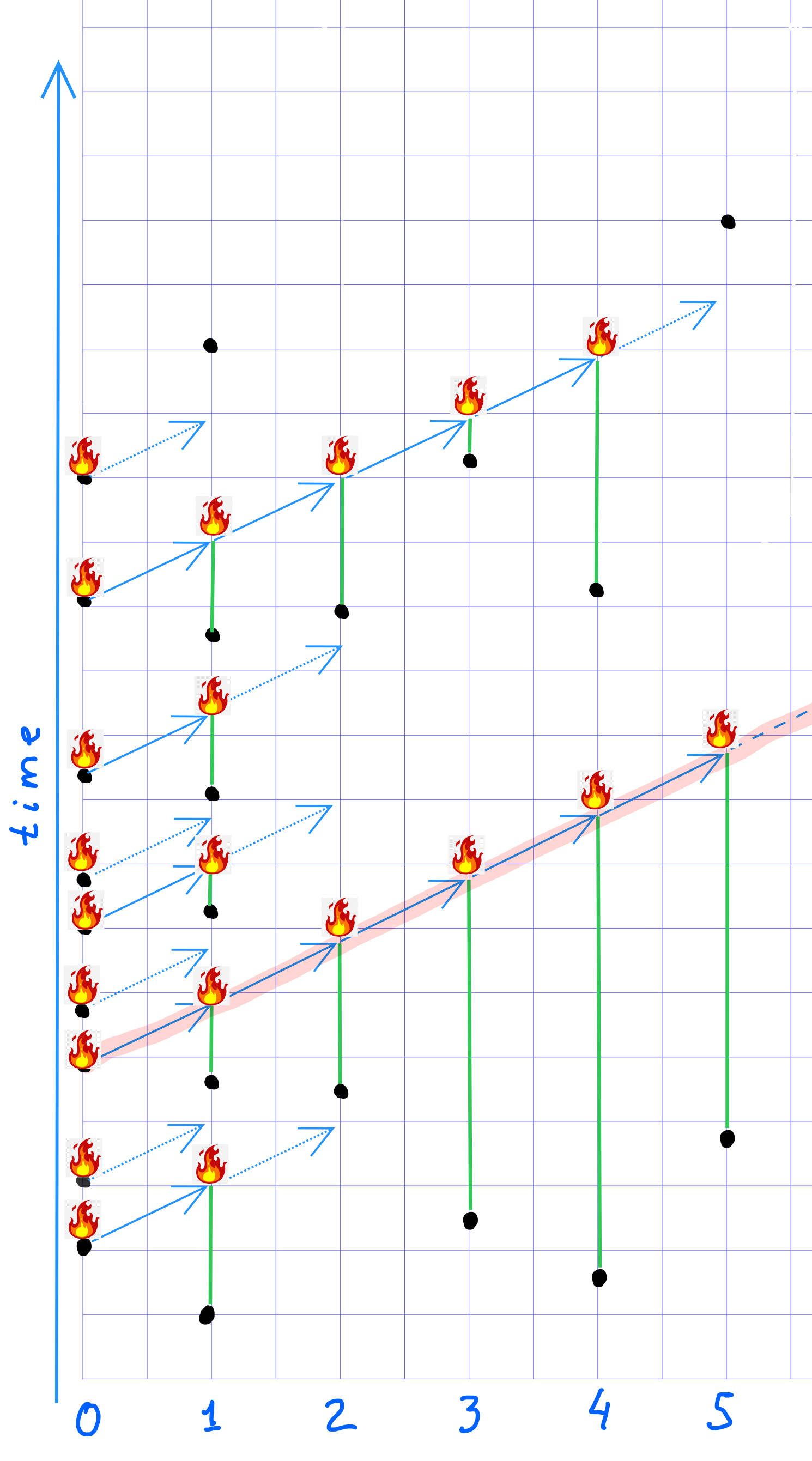}
        \caption{Graphical representation in the case $\theta\equiv 0$, and a constant $\Delta\equiv a>0$. Solid arrows represent the spread of fire, while dotted arrows indicate unsuccessful attempts to spread. The black dots mark the appearance of trees, and green lines denote periods during which a tree occupies a site before it burns. The highlighted line of arrows represents the infinite fire.}
        \label{fig1}
    \end{minipage}%
    \qquad
    \begin{minipage}{0.45\textwidth}
        \centering
        \includegraphics[width=0.83\textwidth]{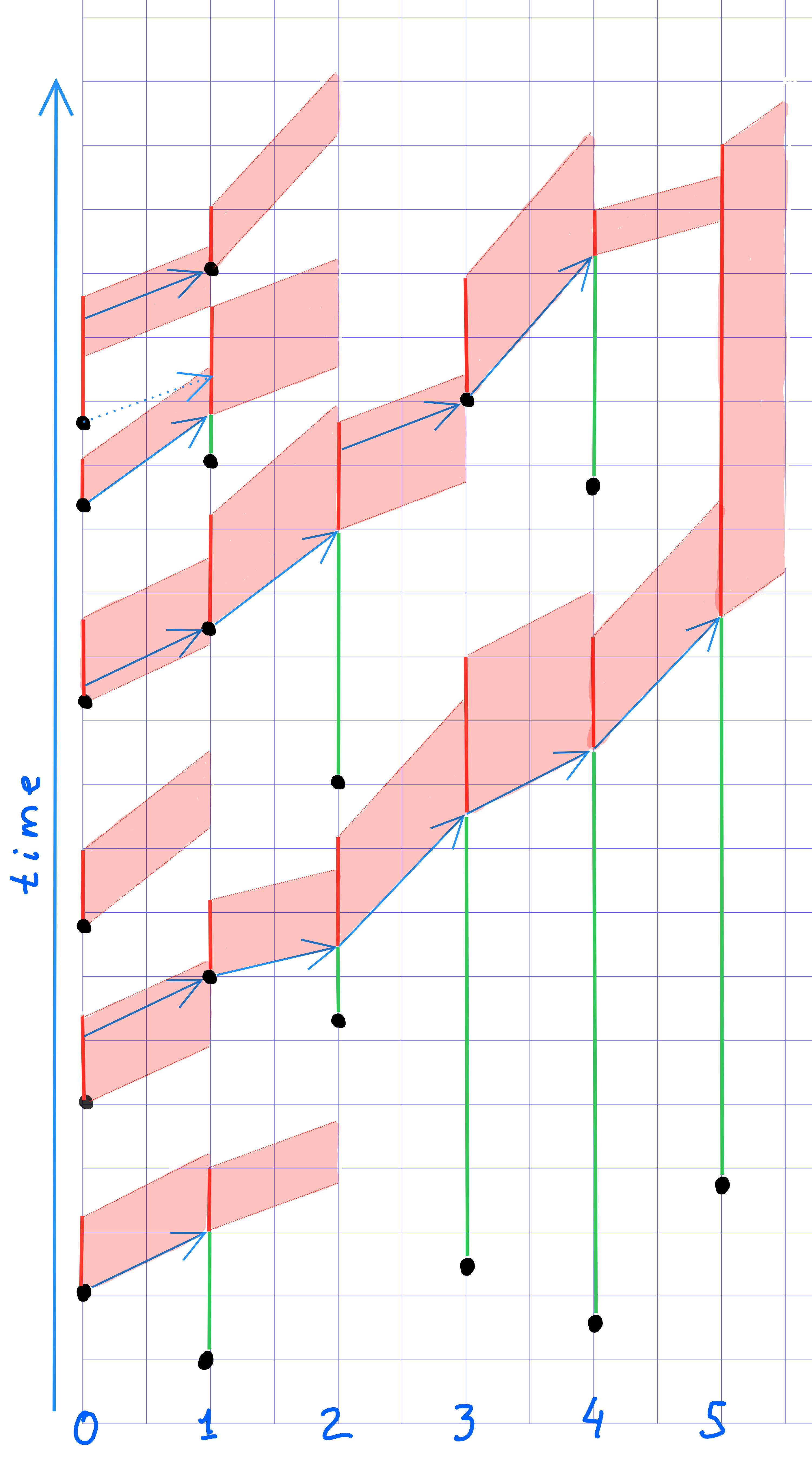}
        \caption{
        The black dots correspond to the appearance of trees; the green (red resp.) segments are the periods when a site is occupied by a ``healthy'' (burning resp.) tree. The shaded areas represent the periods when a burning tree at site $x$ affects site $x+1$. The solid arrows show the fire spreading to a neighbouring tree; the dotted arrows represent the situations when the fire {\em tried} to spread unsuccessfully.}
        \label{fig2}
    \end{minipage}
\end{figure}

If the $k$-th fire does not stop at $x$, and $x$ started burning at time $t$, then we can draw an arrow from some point in the red segment with coordinates $(x,s)$ with $s\in[t,t+\theta_{x,i})$ to the point $(x+1,s+\Delta_{x,i})$ such that the latter point is green, and each fire can be represented by a sequence of arrows with coordinates
\begin{align*}
(0,\nu_k)&\to(1,r_{1,k})\\    
(1,r_{1,k})&\to(2,r_{2,k})\\    
\dots\\
(n_{k-1},r_{n_{k-1},k})&\to(n_{k},r_{n_{k},k})
\end{align*}
such that the slope of the $x$-th arrow is $\Delta_{x,k}$ and the arrow always points to a green vertex.

\begin{remark}
As it follows from the description of the model, distinct arrows from $x$ to $x+1$ do not cross.   
\end{remark}

\subsection{Structure of the paper}
The rest of the paper is organized as follows. In Section~\ref{secEIF} we prove the existence of the phenomenon called ``the infinite fire'' in case of non-trivial but identically distributed spreading times~$\Delta$. In Section~\ref{secIB} we consider the situation when the burning time $\theta$ is identically zero, yet the spreading time is either constant or asymptotically decreasing, and establish conditions for (non)-existence of an infinite fire. In Section~\ref{secIS}, on the other hand, we show that if the spreading of the fire is immediate ($\Delta=0$) then no infinite fire occurs, and we also obtain some results on how quickly the fire spreads.

\section{Existence of infinite fire}\label{secEIF}
The infinite fire is a phenomenon which cannot happen in the forest fire models studied in either~\cite{Comets} or~\cite{Stas}; it arises, however, when there is a delay in the spreading of fire.
\begin{thm}\label{t:infirexists}
Suppose that $\P(\Delta>0)>0$. Then a.s.\ there will be an infinite fire, i.e. $\P(\inf\{k:\ n_k=\infty\}<\infty)=1$.
\end{thm}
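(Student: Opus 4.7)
The overall plan is to establish the uniform conditional lower bound $\P(n_k = \infty \mid \G_{k-1}) \geq c_0 > 0$ almost surely, where $\G_{k-1}$ is the natural $\sigma$-field just before the $k$-th fire starts at the origin, and then conclude by L\'evy's extension of the Borel--Cantelli lemma that $\{n_k = \infty\}$ happens for infinitely many (in particular, at least one) $k$ a.s.

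First I would carry out the key computation for the first fire. Pick $\varepsilon, p > 0$ with $p = \P(\Delta \geq \varepsilon)$ and assume for simplicity $\theta \equiv 0$; a nonzero $\theta$ only widens the ignition window at the next site and makes the argument easier. Given the first-fire delays $\Delta_{y,1}$ and the initial wait $\nu_1$, the first fire is infinite iff at every site $x \geq 1$ the independent exponential growth time $Y_x$ satisfies $Y_x < \nu_1 + S_x$, where $S_x = \Delta_{0,1} + \dots + \Delta_{x-1,1}$. The $Y_x$'s being conditionally independent,
\[
\P(n_1 = \infty \mid \nu_1, \{\Delta_{y,1}\}) = \prod_{x \geq 1}\bigl(1 - e^{-(\nu_1 + S_x)}\bigr).
\]
The strong law applied to the i.i.d.\ indicators $\mathbf{1}_{\{\Delta_{y,1} \geq \varepsilon\}}$ gives $S_x \geq (\varepsilon p/2)\,x$ for all sufficiently large $x$ a.s., so $\sum_x e^{-S_x} < \infty$ a.s.\ and the product is a.s.\ positive. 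Taking expectations yields $\P(n_1 = \infty) > 0$.

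Next I would adapt this to the $k$-th fire. Given the state $\omega$ of the system at $\nu_k$, a site occupied at $\nu_k$ keeps its tree until the $k$-th fire arrives (provided no prior fire interferes), whereas a vacant site $y$ carries a tree at arrival time $\nu_k + S_y^{(k)}$ iff its fresh exponential regrowth clock fires within $S_y^{(k)}$. The worst case is $\omega \equiv 0$, producing the uniform bound
\[
\P(n_k = \infty \mid \G_{k-1}) \geq c_0 := \E\Bigl[\prod_{x \geq 1}\bigl(1 - e^{-S_x}\bigr)\Bigr] > 0,
\]
because, conditional on $\G_{k-1}$, the delays consumed by the $k$-th fire are a fresh i.i.d.\ $\Delta$-distributed family (selected from the as-yet-unused entries of the pre-generated pool $\{\Delta_{x,i}\}$). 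Then $\sum_k \P(n_k = \infty \mid \G_{k-1}) = \infty$ a.s., and L\'evy's conditional Borel--Cantelli delivers the theorem.

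The main technical obstacle I anticipate is handling the case where the $(k-1)$-th fire has not terminated by time $\nu_k$: a still-burning prior fire can consume a tree at some site in $[1, n_{k-1}]$ after $\nu_k$ and thereby shrink the regrowth window available to the $k$-th fire, which breaks the monotonicity that underlies the worst-case argument. A workable fix is to additionally condition on the event $\{\xi_k > \zeta_{k-1}\}$ that the prior fire has ended by $\nu_k$ (on $\{n_{k-1} < \infty\}$ we have $\zeta_{k-1} < \infty$ a.s., so this event has positive $\G_{k-1}$-conditional probability); one then checks that the Borel--Cantelli sum restricted to those $k$ for which this conditioning event holds still diverges a.s., and the conclusion follows.
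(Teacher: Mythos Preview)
Your high-level plan---establish a uniform positive conditional lower bound and invoke L\'evy's conditional Borel--Cantelli lemma---is exactly the paper's strategy. The crucial difference is \emph{which} sequence of events you run it on. You try to bound $\P(n_k=\infty\mid\G_{k-1})$ for the $k$-th fire started at the origin; the paper instead works with the successive maxima $m_1<m_2<\dots$ and bounds, given $\F_k$, the probability that the next fire to pass beyond $m_k$ never stops.

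The interference problem you flag is real, and your proposed fix does not close it. Conditioning on $\{\xi_k>\zeta_{k-1}\}$ guarantees only that fire $k-1$ has terminated by time $\nu_k$; it says nothing about fires $k-2,k-3,\dots$, any of which may still be running. For example, if $n_{k-1}<n_{k-2}$, fire $k-2$ can be burning sites well beyond $n_{k-1}$ long after fire $k-1$ has stopped. Since distinct arrows do not cross, every such surviving prior fire is ahead of your $k$-th fire and will clear trees in its path, shrinking the regrowth windows below $S_x^{(k)}$; your ``worst case $\omega\equiv 0$'' lower bound $\prod_x(1-e^{-S_x})$ then genuinely fails. Strengthening the conditioning to ``all prior fires have terminated by $\nu_k$'' would restore the monotonicity, but then you must show that, on $\{\text{no infinite fire}\}$, this event recurs with conditional probability bounded away from zero---which is not obvious (indeed delicate when $\E\Delta=\infty$, a case the theorem allows).

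The paper's device sidesteps interference entirely: once the record-breaking fire reaches site $m_k+1$ at some time $t$, the sites $m_k+2,m_k+3,\dots$ have \emph{never} been burnt by any fire, so the probability that $m_k+i$ carries a tree at its arrival time is exactly $1-e^{-(t+\Delta_{m_k+1,1}+\dots+\Delta_{m_k+i-1,1})}\ge 1-e^{-\sum_{j<i}\Delta_{m_k+j,1}}$, with the $\Delta$'s fresh i.i.d.\ copies. Combined with a short induction showing every site is eventually reached (so that record-breaking fires keep coming), this yields the uniform bound directly. Your argument can be repaired by switching to this filtration along successive maxima; as written, the gap is genuine.
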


The next auxiliary statement is quite obvious, yet, for the sake of completeness, we present its short proof.
\begin{lemma}\label{lem:lem}
Suppose that $\Delta$ is a non-negative random variable with $\P(\Delta>0)>0$. Then for some constants $\d_1>0$ and $\d_2>0$
$$
\P\left(\sum_{j=1}^n \Delta_j'\ge n\d_1\text{ \ for all }n\ge 1\right)\ge \d_2
$$ 
where $\Delta_j'$ are i.i.d.\ copies of $\Delta$.
\end{lemma}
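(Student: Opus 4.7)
The plan is to compare the sum $\sum_{j=1}^n \Delta_j'$ with a simpler random walk that has positive drift. First, since $\P(\Delta>0)>0$, I would fix some $a>0$ with $p:=\P(\Delta\ge a)>0$, and replace each $\Delta_j'$ by the Bernoulli-type lower bound $Y_j := a\cdot \mathbf{1}\{\Delta_j'\ge a\}\le \Delta_j'$. The $Y_j$ are i.i.d.\ with mean $ap>0$, and it suffices to prove the claimed inequality with $\Delta_j'$ replaced by $Y_j$. I would then take $\d_1:=ap/2$, so that $a-\d_1>0$ and the centered walk $S_n:=\sum_{j=1}^n(Y_j-\d_1)$ has strictly positive mean increment $ap/2$.

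By the strong law of large numbers $S_n\to+\infty$ almost surely, and consequently $\inf_{n\ge 1} S_n>-\infty$ a.s.\ as well. Hence, for some sufficiently large constant $M>0$ one has
\[
\P\left(\inf_{n\ge 1} S_n\ge -M\right)\ge 1/2.
\]
The issue is that this controls the walk only up to an additive constant, whereas we need $S_n\ge 0$ for \emph{every} $n\ge 1$ simultaneously, including the small values of $n$ where the fluctuations have not yet been dominated by the drift.

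The remedy is to ``preload'' the walk with enough large increments. I would pick $k_0:=\lceil M/(a-\d_1)\rceil$ and consider the event $G:=\{Y_1=Y_2=\dots=Y_{k_0}=a\}$, which has positive probability $p^{k_0}$. On $G$, for $n\le k_0$ we get $\sum_{j=1}^n(Y_j-\d_1)=n(a-\d_1)\ge 0$, and the partial sum has accumulated a cushion of at least $M$ by step $k_0$. The shifted walk $S_n':=\sum_{j=1}^n(Y_{k_0+j}-\d_1)$ is independent of $G$ and has the same law as $S_n$, so on the independent event $G\cap\{\inf_{n\ge 1}S_n'\ge -M\}$ the cushion absorbs all subsequent dips, yielding $\sum_{j=1}^n Y_j\ge n\d_1$ for every $n\ge 1$. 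Setting $\d_2:=p^{k_0}/2$ gives the required positive lower bound, and since $Y_j\le\Delta_j'$ the original statement follows.

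The only non-routine step is recognizing that, although the SLLN only yields an eventual property, one can patch the initial segment by conditioning on a positive-probability configuration of the first few increments; the rest is bookkeeping with the constants $a$, $p$, $\d_1$, $M$ and $k_0$.
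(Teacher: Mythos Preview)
Your proposal is correct and follows essentially the same approach as the paper: both reduce to the Bernoulli minorant $Y_j=a\,\mathbf{1}\{\Delta_j'\ge a\}$, take $\d_1=ap/2$, invoke the SLLN to handle the tail, and then ``preload'' the first block of increments with the value $a$ to cover the initial segment, arriving at $\d_2=p^{k}/2$ for an appropriate $k$. The only cosmetic difference is that the paper chooses its block length $n_0$ via $\P(N\le n_0)\ge 1/2$ (where $N$ is the last time the walk dips below the line) and finishes with a monotonicity-of-conditioning inequality, whereas you choose $k_0$ from a bound $M$ on $\inf_n S_n$ and finish with an independence argument for the shifted walk; these are interchangeable bookkeeping choices.
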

\begin{proof}
By the condition of the lemma, there exist  $a_1>0$ and $p_1>0$ such that $\P(\Delta\ge a_1)\ge p_1$.
Hence $\Delta_j'$ is stochastically larger than
$$
Z_j=\begin{cases}
a_1, &\text{with probability }    p_1;\\
0, &\text{with probability }    1-p_1
\end{cases}
$$
and we can assume that $Z_j$s are also i.i.d.
Consequently, it suffices to show that
$$
\P\left(\sum_{j=1}^n Z_j\ge n\d_1\text{ for all }n=1,2,\dots\right)\ge \d_2
$$ 
where $\delta_1:=\E Z_j/2=a_1p_1/2>0$. 

By the strong law of large numbers $\frac1n\sum_{j=1}^n Z_j\to a_1p_1$ a.s.\ as $n\to\infty$, hence, since $\d_1<\E Z_J$, a.s.\ there exists a (random)~$N=N(\omega)$ such that $\sum_{j=1}^n Z_j\ge n\d_1$ for all $n \ge N$. Since $N$ is finite, for some non-random $n_0\ge 1$ we have $\P(N\le n_0)\ge 1/2$. As $Z_j\in\{0,a_1\}$, we have
\begin{align*}
\frac12&\le \P\left( \sum_{j=1}^n Z_j\ge n\d_1\ \forall n> n_0\right)
\le
\P\left( \sum_{j=1}^n Z_j\ge n\d_1\ \forall n> n_0\mid Z_1=Z_2=\dots=Z_{n_0}=a_1\right)
\\ & =\frac{\P\left( \sum_{j=1}^n Z_j\ge n\d_1\ \forall n> n_0\text{ and } Z_1=Z_2=\dots=Z_{n_0}=a_1\right)}{\P\left( Z_1=Z_2=\dots=Z_{n_0}=a_1\right)}
\\ & =\frac{\P\left( \sum_{j=1}^n Z_j\ge n\d_1\ \forall n\ge 1\text{ and } Z_1=Z_2=\dots=Z_{n_0}=a_1\right)}{p_1^{n_0}}
\le \frac{\P\left( \sum_{j=1}^n Z_j\ge n\d_1\ \forall n\ge 1\right)}{p_1^{n_0}}
\end{align*}
so the Lemma holds with e.g.\ $\d_2:=p_1^{n_0}/2>0$.    
\end{proof}

\begin{proof}[Proof of Theorem~\ref{t:infirexists}]
First, we will show that for any $x\in \Z_+$ the fire will eventually reach~$x$. In fact, by induction we will show a stronger statement: for each $x=0,1,2,\dots$ there will be infinitely many fires that reach the site $x$ and they will occur at arbitrary large times (i.e., all~$f_{x,i}$s are  finite, and $f_{x,i}\nearrow\infty$ as~$i$ increases.) The proof is based on induction. 

For site $x=0$ this fact is trivial as $f_{0,k}\equiv \nu_k$, see~\eqref{eq:nus}. Now suppose that our statement is true for some $x\geq 0$. This means that there are infinitely many fires that start at times $f_{x,1}<f_{x,2}<\dots$. Since the probability of not having a single tree at $x+1$ by time $s$ is $e^{-s}\downarrow 0$ and $\lim_{i\to\infty}f_{x,i}=\infty$, eventually for some time $f_{x,j}$ there will be a tree at $x+1$ so it will start burning at time $f_{x,j}+\Delta_{x,j}<\infty$ and burn during time $\theta_{x+1,1}\ge 0$; thus we ensured that there will be at least one fire at $x+1$. Now consider the process after the time $f_{x,j}+\Delta_{x,j}+\theta_{x+1,1}$. It takes an exponentially distributed time for a new tree to appear at $x+1$; after that, it will eventually be burnt by a fire started at site $x$ at some time $f_{x,j'}$ with $j'>j$. This fire will start burning the tree at $x+1$ again at time $f_{x,j'}+\Delta_{x,j'}$. Similar arguments hold for the third fire, the fourth fire, etc. The exponential waiting time for the appearance of new trees also implies that $f_{x+1,j}$ is stochastically larger than the time of arrival of the $j$th point of a Poisson process with rate $1$, yielding $f_{x+1,i}\to\infty$. The induction step is finished.
\vskip 5mm

Let $m_1$ denote the location of the tree, farthest from the origin, burnt by the first fire; if the fire continues indefinitely, we let $m_1=+\infty$. For $k\ge 2$, we recursively define $m_k$ as the location of the tree, farthest from the origin, burnt by the first fire which reaches beyond $m_{k-1}$; we can think of $m_k$s as the successive maxima reached by the fires.  If $m_{k-1}=\infty$ for some $k$, we set $m_{k}=m_{k+1}=\dots=\infty$ as well. The successive maxima obviously satisfy  $0\le m_1<m_2<m_3<\dots$ (with the convention $\infty<\infty$).

Let $A_k=\{m_k<\infty\}$, i.e., the $k$-th fire was finite, and let $T_k=f_{m_k,1}+\theta_{m_k,1}$ be the time when the site $m_k$ stops burning for the first time. Let $\F_k$ be the $\sigma$-algebra\footnote{note that this is a sigma-algebra generated by the stopping time rather than a discrete time} generated by the states of sites $\{0,1,\dots,m_k+1\}$ during the period $[0,T_k]$; {\em we do not include in it} any events related to appearances of the trees at sites $m_k+2,m_k+3,\dots$.  We want to compute a lower bound on $\P(A_{k+1}^c\mid \F_{k})$, assuming that $A_k$ has occurred and $m_k=m$. Suppose that the fire has reached $m+1$ by some time $t$ (note that $t>T_k$). It takes $\Delta_{m+1,1}$ units of time for the fire to reach $m+2$. Assuming that there is a tree at $m+2$ by that time (i.e., $t+\Delta_{m+1,1}$), the fire will continue to $m+3$ if there is a tree at $m+3$ by time $t+\Delta_{m+1,1}+\Delta_{m+2,1}$, and so on. Thus the fire will continue {\em ad infinitum}, provided that
\begin{align*}
\text{there is a tree at site $(m+i)$\ by time }\ t
+\Delta_{m+1,1}+\Delta_{m+2,1}+\dots++\Delta_{m+i-1,1}
\\ \text{ for each }i=2,3,\dots   
\end{align*}
Since the probability that there is no tree at site $x$ by time $s\ge 0$ is $e^{-s}$ (assuming no previous fires at $x$), the probability of the above event given $t$ and $\{\Delta_{m+i,1}\}_{i=1}^\infty$ is
\begin{align}\label{eqstar}
\prod_{i=2}^\infty \left(1-e^{-t-\sum_{k=1}^{i-1}\Delta_{m+k,1}}\right)\ge
\prod_{i=2}^\infty \left(1-e^{-\sum_{k=1}^{i-1}\Delta_{m+k,1}}\right).
\end{align}
At the same time, by Lemma~\ref{lem:lem}
$$
\P\left(e^{-\sum_{j=1}^n \Delta_j'}\le e^{-n\d_1}\text{ for all }n=1,2,\dots\right)\ge \d_2
$$ 
hence if we denote
$$
\epsilon_0:=\prod_{n=1}^\infty \left(1-e^{-n\d_1}\right)>0
$$
then the probability that the RHS of~\eqref{eqstar} is larger than $\epsilon_0$ is no less than $\d_2$. As a result,
$$
\P(A_{k+1}^c\mid\F_k)\ge \epsilon_0\delta_2>0\quad\text{ for all }k\ge 1
$$
and hence by the conditional Borel-Cantelli lemma there will be an infinite fire a.s.
\end{proof}

\section{Instant burning}\label{secIB}
In this Section, we will consider the case when $\theta=0$, and $\E\Delta_x>0$ that is, the trees are burnt instantly but the expected time for the fire to spread from site $x$ to  $x+1$ is strictly positive.

\begin{thm}\label{t:no2infirexists}
Suppose $\theta\equiv 0$, $0<\E\Delta<\infty$ and~$\mathrm{Var}(\Delta)>0$. Then there is only one infinite fire.
\end{thm}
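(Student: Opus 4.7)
The plan is to show that any would-be second infinite fire is forced to chase the first, but the lag between the arrival times of two hypothetical infinite fires at a common site far from the origin forms a mean-zero non-degenerate random walk, whose $\liminf$ equals $-\infty$ almost surely; this eventually forces the chasing fire to ``overtake'' the leader, which is impossible.

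By Theorem~\ref{t:infirexists} the first infinite fire has a.s.\ finite index $K:=\inf\{k:n_k=\infty\}$, and I set $L:=\inf\{\ell>K:n_\ell=\infty\}$. The goal $\P(L<\infty)=0$ will follow by countable additivity once I show $\P(K=k,L=\ell)=0$ for every $k<\ell$, so I fix such a pair. On $\{K=k,L=\ell\}$ all fires with indices in $\{1,\dots,k-1,k+1,\dots,\ell-1\}$ are finite, hence
$$J_0:=\max\bigl(n_1,\dots,n_{k-1},n_{k+1},\dots,n_{\ell-1}\bigr)<\infty \quad\text{a.s.}$$
For $i>J_0$ the $k$-th fire is the only one among the first $\ell-1$ that reaches $i$, so it uses the first-burn delay $\Delta_{i,1}$, while the $\ell$-th fire (if it reaches $i$) uses $\Delta_{i,2}$. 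Because $\theta\equiv 0$, for the $\ell$-th fire to ignite the tree at $j>J_0$ the $\exp(1)$-distributed regrowth time $\tau_j$ of the second tree at $j$ must satisfy $\tau_j<D_j$, where $D_j:=T_j^{(\ell)}-T_j^{(k)}$ is the arrival-time lag.

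The key step is to recognise $D_j$ as a random-walk perturbation: telescoping gives, for $j>J_0$,
$$D_j = D_{J_0+1}+W_{j-J_0-1},\qquad W_n:=\sum_{i=J_0+1}^{J_0+n}\bigl(\Delta_{i,2}-\Delta_{i,1}\bigr),$$
whose increments $\Delta_{i,2}-\Delta_{i,1}$ are i.i.d., symmetric about zero, integrable ($\E\Delta<\infty$), and non-degenerate ($\mathrm{Var}(\Delta)>0$). Since $J_0$ and $D_{J_0+1}$ depend only on the delays and tree-arrivals at sites $\le J_0$ (together with the tree-arrival at site $J_0+1$), whereas $(W_n)$ uses only the delays at sites $>J_0$, the walk is independent of $(J_0,D_{J_0+1})$. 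Hence $(W_n)$ is a mean-zero random walk on $\R$, which by Chung--Fuchs is recurrent and consequently satisfies $\liminf_n W_n=-\infty$ a.s. Therefore $D_j<0$, and the necessary condition $\tau_j<D_j$ fails, for infinitely many $j>J_0$; this contradicts the hypothesis $n_\ell=\infty$. This yields $\P(K=k,L=\ell)=0$, and summing over $k<\ell$ gives the theorem.

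The most delicate step is the independence claim that underlies the random-walk analysis. I would handle it by partitioning $\{K=k,L=\ell\}$ according to the (countably many) possible values of $J_0$: on each level $\{J_0=j_0\}$, both $J_0$ and $D_{j_0+1}$ are measurable with respect to the $\sigma$-algebra generated by the process at sites $\{0,1,\dots,j_0+1\}$ (specifically, the delays $\Delta_{i,\cdot}$ with $i\le j_0$ together with the tree-arrival times at sites $\le j_0+1$), whereas the walk $W$ depends only on $\Delta_{i,\cdot}$ with $i>j_0$, and these are independent by construction of the model.
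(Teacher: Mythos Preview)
Your proof is correct and follows essentially the same route as the paper's: both identify the lag $D_j$ between the arrival times of the two hypothetical infinite fires at site $j$ (for $j$ beyond the reach of all intervening finite fires) as a finite constant plus a mean-zero non-degenerate random walk in the i.i.d.\ increments $\Delta_{i,2}-\Delta_{i,1}$, and use recurrence (Chung--Fuchs, or equivalently Spitzer as in the paper) to force $D_j<0$ eventually, contradicting $D_j>0$. Your explicit decomposition over fixed indices $(k,\ell)$ and over $\{J_0=j_0\}$ is a slightly more formal packaging of what the paper does with its random threshold $x^*$.

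One remark on your closing ``delicate step'': the independence you discuss is not actually needed, and as stated it does not quite address the full conditioning. The event $\{K=k,\,L=\ell\}$ depends on the delays at \emph{all} sites (both fires must propagate forever), so it is not measurable with respect to your $\sigma$-algebra of sites $\le j_0+1$. What saves the argument is simpler: the statement $\liminf_n W_n=-\infty$ holds with \emph{unconditional} probability $1$, so its complement is null; intersecting with $\{K=k,\,L=\ell,\,J_0=j_0\}$ and using the deterministic identity $D_j=D_{j_0+1}+W_{j-j_0-1}$ (valid on that event) already gives the contradiction. The paper's proof is at the same level of informality here---it simply asserts $\P(\liminf S_x=-\infty\mid x^*)=1$---and is justified by the same unconditional a.s.\ reasoning. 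You might also note explicitly that fire $\ell$ is indeed the \emph{second} fire to reach each $i>J_0$ (so that $\Delta_{i,2}$ is the relevant delay); this uses the non-crossing of arrows remarked in the paper, which rules out any later fire overtaking fire $\ell$.
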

\begin{remark}
The non-random case $\Delta\equiv \mathrm{const}>0$ will be covered later in Theorem~\ref{t:ed}.
\end{remark}
\begin{proof}
The existence of infinite fire follows from Theorem~\ref{t:infirexists} since $\E\Delta>0$ implies $\P(\Delta>0)>0$, so we only need to show that there cannot be  {\em a second infinite fire}.

For each site $x\in\Z_+$ let $i_x$ be {\em the index of the fire for this site}, which turned out to be the first infinite fire (hence there will be an $x_1$  such that for all $x>x_1$ we have $i_x=1$).

Assume now that there is a second infinite fire. Since for any $t>0$ the number of distinct fires at each site $x$ by time $t$ is a.s.\ finite, there will be a (random) site $x_2$ such that each site with $x>x_2$ was not burnt by any {\em finite fire} between the times when it was burnt by the first and the second infinite fire. Hence, for each $x>x^*=\max(x_1,x_2)$, the first (the second resp.) fire to burn $x$ is, in fact, the first  (the second resp.) infinite fire. Since burning is immediate ($\theta\equiv 0$), for $x>x^*$
\begin{align*}
 f_{x+1,1}=f_{x,1}+\Delta_{x,1}\quad\text{and}\quad f_{x+1,2}=f_{x,2}+\Delta_{x,2}.
\end{align*}
Let us define
$
S_x=
\sum_{k=1}^x\xi_k,
$
where $\xi_k=\Delta_{k,2}-\Delta_{k,1}$ are i.i.d.\ mean zero random variables. Then $f_{x,i_x+1}-f_{x,i_x}=C(x^*)+S_x$ for $x\ge x^*$  where $C(x^*)$ is some quantity depending on~$x^*$ only. 
Indeed, for all sites at locations $x>x^*$ the first fire to burn $x$ will be {\em the first infinite fire}, and the second fire to burn $x$ will be the second infinite fire. Since $S_x$ is a one-dimension zero-mean random walk, it is recurrent by \cite[P8, Section I.2]{Spitzer} and hence $\P(\liminf_{x\to\infty} S_x=-\infty\mid x^*)=1$, which contradicts the fact that we must always have $f_{x,i_x+1}>f_{x,i_x}$.
with probability $0$.
\end{proof}

\subsection{Constant spreading time}
Recall that the burning time $\theta$ is assumed to be zero, and assume additionally that $\Delta$ is the same non-random positive constant at all locations; in this Section we want to study this special case. An interesting fact, observed by Edward Crane, is that after the infinite fire, the model here can be coupled with that of~\cite{Stas}. 
We have already shown that there exists an infinite fire in this case; now we are interested in finding out the distribution of {\em when exactly} the infinite fire starts, and what the index $\kappa$ of that fire can be.
\begin{thm}\label{t:ed}
Assume that $\theta\equiv 0$ and $\Delta\equiv a>0$. Suppose that the infinite fire starts at site $0$ at some time $T\ge  0$ (which is, of course, random). Then this fire reaches point $x\ge 1$ precisely at time $T+ax$, and if we denote $\hat \eta_x(t)=\eta_x(t+T+ax)$, $t\ge 0$, then the process $\hat\eta$ is exactly the one introduced in~\cite{Stas}, i.e., the one with $\theta=\Delta\equiv 0$.
\end{thm}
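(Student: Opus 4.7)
The plan is to prove the two assertions in sequence. First I would show by induction on $x$ that the infinite fire reaches site $x$ at precisely time $T+ax$: the base case $x=0$ is the definition of $T$, and for the inductive step, given that the fire is at $x$ at time $T+ax$, instantaneous burning ($\theta\equiv 0$) and deterministic delay ($\Delta\equiv a$) mean the fire attempts to propagate to $x+1$ at exactly time $T+a(x+1)$; since this is an infinite fire, that attempt must succeed, so site $x+1$ ignites at time $T+a(x+1)$.

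For the second assertion, my plan is to check that $\hat\eta$ satisfies the defining properties of the model with $\theta=\Delta\equiv 0$. First, $\hat\eta_x(0)=\eta_x(T+ax)=0$ for every $x$, because the infinite fire just burnt (and instantly vacated) site $x$; hence the initial state is all vacant. For the tree-growth dynamics, the per-site Poisson clocks are independent and memoryless, so conditionally on the history up to time $T+ax$ the next tree at site $x$ appears after an independent $\exp(1)$ time, and subsequent inter-growth times are also $\exp(1)$; thus in shifted time $\hat\eta_x$ regenerates trees at rate $1$ whenever vacant, independently across $x$. Applying the strong Markov property at the stopping time $T=\nu_\kappa$ further shows that the waiting times $\xi_{\kappa+1},\xi_{\kappa+2},\ldots$ at site $0$ remain i.i.d.\ $\exp(1)$, so in shifted time trees arrive at site $0$ at the points of a rate-$1$ Poisson process and ignite immediately, reproducing the Stas fire source at the origin.

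The crux of the argument is the following computation: a fire starting at site $0$ at original time $\nu_{\kappa+j}$, if it reaches site $x$, does so at time $\nu_{\kappa+j}+ax$, which in shifted coordinates at site $x$ equals $\nu_{\kappa+j}-T$, \emph{independently of $x$}. Consequently every post-infinite fire spreads instantaneously in the shifted clock, and the condition that it propagates from $x$ to $x+1$---namely, that a tree is present at $x+1$ at original time $\nu_{\kappa+j}+a(x+1)$---translates precisely to the condition that $\hat\eta_{x+1}$ is occupied at the (shifted) ignition time, which is exactly the Stas rule of sweeping through occupied sites and halting at the first vacant one. Successive fires remain ordered at every site since $\nu_{\kappa+j}+ax<\nu_{\kappa+j+1}+ax$, so no overlap issues arise. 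The main subtlety I expect to have to handle carefully is the strong Markov step at the stopping time $T=\nu_\kappa$ (the index $\kappa$ of the first infinite fire is itself determined by the pre-$T$ history via Theorem~\ref{t:infirexists}), but once that is in place the remainder reduces to bookkeeping: the deterministic shift $ax$ exactly cancels the fire-propagation delay, which is what produces the coupling with the $\Delta\equiv 0$ model of~\cite{Stas}.
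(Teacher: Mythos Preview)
Your proposal is correct and follows the same idea as the paper's proof---namely, that the deterministic delay $a$ produces a constant-slope shear which, when undone by the shift $t\mapsto t+T+ax$, collapses all fire propagation to instantaneous---but you supply considerably more detail than the paper, whose entire proof is the one-sentence observation that the result ``follows immediately from the definition of the process $\hat\eta_x(t)$ and can be also seen from the graphical representation.'' Your flagged subtlety is real: $T=\nu_\kappa$ is \emph{not} a stopping time for the natural filtration (deciding $\kappa=k$ requires knowing fire $k$ reaches every site, hence looking arbitrarily far into the future), so the argument should be phrased not as ``strong Markov at $T$'' but as the site-by-site observation that the event $\{\kappa=k\}$ uses only the history of site $x$ up to time $\nu_k+ax$, whence the post-$(\nu_k+ax)$ Poisson clock at $x$ remains fresh by independence and memorylessness.
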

\begin{proof}
This fact follows immediately from the definition of the process $\hat\eta_x(t)$ and can be also seen from the graphical representation of the fire process (see Figure~\ref{fig1}), where all the arrows have the same slope $a$, and the infinite fire corresponds to a straight line going from $(0,T)$ towards infinity at the constant slope $a$.
\end{proof}

To get an estimate of when the infinite fire starts, observe the following. The times of consecutive fires at the origin form a Poisson process, and thus the $k$th fire starts at time $\nu_k=\xi_1+\xi_2+\dots+\xi_k$ where $\xi_i$ are i.i.d.\ exponential(1) random variables (see~\eqref{eq:nus}). Let $\kappa\ge 1$ be the index of the infinite fire, thus $T=\nu_\kappa$. In the statement below we estimate the probability that the first fire is infinite.
\begin{prop}
Assume the same as in Theorem~\ref{t:ed}. Then
$$
1-\frac{1}{2\mu}\le \P(\kappa=1)\le \mu(1-e^{-1/\mu})
$$
where $\mu:=e^a-1$. Hence, for large $a>0$ we have $\P(\kappa=1)=1-\frac1{2(e^a-1)}+o(1)$.
\end{prop}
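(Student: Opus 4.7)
The plan is to derive an exact integral expression for $\P(\kappa=1)$ by conditioning on $\nu_1$, and then to sandwich this integral between elementary product inequalities that match to leading order.

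First I would compute $\P(\kappa=1\mid\nu_1=t)$. Given $\nu_1=t$, Theorem~\ref{t:ed} shows that the first fire reaches site $x$ at time $t+ax$, provided that a tree is present at $x$ by that time. Since this is the \emph{first} fire, no previous fire has touched any site $x\ge 1$, so the tree at $x$ is present at time $t+ax$ with probability $1-e^{-(t+ax)}$, and these events are mutually independent across $x$ because the tree-appearance clocks are i.i.d.\ $\exp(1)$ and independent across sites. Consequently
\[
\P(\kappa=1\mid\nu_1=t) = \prod_{x=1}^\infty\left(1-e^{-(t+ax)}\right),
\]
and integrating against the $\exp(1)$ density of $\nu_1$, followed by the change of variable $u=e^{-t}$ and the abbreviation $q=e^{-a}$, yields
\[
\P(\kappa=1) = \int_0^1 \prod_{x=1}^\infty\left(1-uq^x\right)\,du.
\]
Note that $\sum_{x\ge 1}q^x = q/(1-q) = 1/(e^a-1) = 1/\mu$, which is exactly the quantity appearing in both bounds.

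For the upper bound I would apply $\log(1-y)\le -y$ termwise to obtain $\prod_{x\ge 1}(1-uq^x)\le e^{-u/\mu}$; a direct computation then gives $\int_0^1 e^{-u/\mu}\,du = \mu(1-e^{-1/\mu})$. For the lower bound I would invoke the Weierstrass product inequality $\prod_i(1-a_i)\ge 1-\sum_i a_i$ for $a_i\in[0,1]$ (valid pointwise, and trivial when the right-hand side is negative) to get $\prod_{x\ge 1}(1-uq^x)\ge 1-u/\mu$, whence $\int_0^1(1-u/\mu)\,du = 1 - 1/(2\mu)$.

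The asymptotic claim then follows by Taylor expanding the upper bound: $\mu(1-e^{-1/\mu}) = 1 - 1/(2\mu) + O(1/\mu^2)$ as $\mu\to\infty$, so both bounds coincide at order $1/\mu$ and pinch $\P(\kappa=1)$ to $1 - 1/(2(e^a-1)) + o(1)$. I do not anticipate a serious technical obstacle here; the only point requiring care is the termwise independence used in the conditional formula, which is valid precisely because $\kappa=1$ refers to the \emph{first} fire, so no site has been visited previously and the $\exp(1)$ tree-growth clocks at distinct sites act independently.
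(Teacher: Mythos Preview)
Your proof is correct and follows essentially the same approach as the paper: condition on $\nu_1$ to obtain the product $\prod_{x\ge 1}(1-e^{-(\nu_1+ax)})$, then apply the elementary bounds $1-\sum y_k\le \prod(1-y_k)\le e^{-\sum y_k}$ and integrate; the only cosmetic difference is that you perform the substitution $u=e^{-t}$ before applying the bounds, whereas the paper does it afterward.
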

\begin{proof}
The first fire burns $0$ at time $\xi_1$. For this fire to spread indefinitely, one needs to have at least one tree at position $x$ at time $\xi_1+ax$ for each $x\in\{1,2,\dots\}$; given $\xi_1$, this probability can be computed as $\prod_{x=1}^{\infty}\left(1-e^{-\xi_1-ax}\right)$.

Since for any sequence $y_1,y_2,\dots\in[0,1]$  we have
\begin{align}\label{eq:bound}
1-\sum_{k=1}^\infty y_k \le \prod_{k=1}^\infty
(1-y_k)\le \exp\left(-\sum_{k=1}^\infty y_k\right),
\end{align}
and
$$
\sum_{x=1}^\infty e^{-\xi_1-ax}=\frac{e^{-\xi_1}}
{e^a-1}.
$$
By taking the expectation over $\xi_1$, we get
\begin{align*}   
\P(\kappa=1)&=\E \prod_{x=1}^{\infty}\left(1-e^{-\xi_1-ax}\right)\le 
\int_0^\infty\exp\left(-\frac{e^{-u}}
{e^a-1}\right) e^{-u}\mathrm{d}u
\\ &
=
\int_0^1\exp\left(-\frac{v}
{e^a-1}\right) \mathrm{d}v
=(e^a-1)\left(1-e^{-\frac1{e^a-1}}\right).
\end{align*}
On the other hand, by \eqref{eq:bound}
\begin{align*}   
\P(\kappa=1)&\ge 
\int_0^\infty\left(1-\frac{e^{-u}}{e^a-1}\right) e^{-u}\mathrm{d}u
=1-\frac1{2(e^a-1)},
\end{align*}
which completes the proof.
\end{proof}

\begin{remark}
A much better approximation 
$$
\P(\kappa=1)\approx 1 - \frac{e^{-a}+e^{-2a}}2 - \frac{e^{-3a}}6
$$
can be obtained by using a tighter lower bound of $\prod_{k=1}^\infty (1-y_k)$. It gives the relative error of $0.01$ or less for $a\ge 1/2$, and at most $0.07$ for $1/2\ge a\ge 1/3$.
\end{remark}

Suppose the first fire did not become the infinite fire; let $m_1$ be the right-most point burnt by it. By the arguments similar to those in Theorem~\ref{t:ed}, we can couple our process after  time~$\nu_1$ with the simple fire process  $\hat\eta_x(t)$ from~\cite{Stas} such that 
$$
\hat\eta_x(t)=\eta_x(\nu_1+ax+t)\text{ for } x\in\{0,1,\dots,m_1,m_1+1\}. 
$$
Let us also observe that given $\nu_1$,
$$
\P(m_1=k)=e^{-\nu_1-a(k+1)}\prod_{x=1}^{k}\left(1-e^{-\nu_1-ax}\right),\quad k=1,2,\dots.
$$
Let $\delta_1=1$ and, assuming that the first fire was not infinite, let $\delta_2$ be {\it the index} of the first fire which extends beyond $m_1$, i.e., burns site $m_1+1$. Denote by $m_2$ ($\ge m_1+1$) the right-most point burnt by this fire. Then, given the time when it started, that is $\nu_{\delta_2}$,
$$
\P(m_2=m_1+k\mid \nu_{\delta_2})=e^{-\nu_{\delta_2}-a(m_1+k+1)}\prod_{x=m_1+2}^{m_1+k}\left(1-e^{-\nu_{\delta_2}-ax}\right),\quad k=1,2,3,\dots.
$$
Similarly, one can define pairs $(\delta_3,m_3)$, $(\delta_4,m_4)$, etc., until $(\delta_N,m_N)$ where $\delta_N=\kappa$ and $m_N=+\infty$.

Now notice that from coupling with the process in~\cite{Stas}, we have $\nu_{\delta_{k+1}}-\nu_{\delta_{k}}=\hat\tau_{m_k+1}$ where~$\hat\tau_z$ is the first time when the original forest fire process\footnote{that is, the one with $\Delta=\theta=0$} burns site $z\ge 1$. 
Hence $\nu_{\delta_k}=\psi_k(m)$
where 
\begin{align}\label{eq:psi}
\psi_k(\mu)=\sum_{\ell=0}^k\hat\tau_{\mu_\ell+1}    
\end{align}
where we set $\mu_0:=-1$ and thus $\hat\tau_0$ has the same $\exp(1)$ distribution as $\nu_1$. As a result, for any positive integer $n$ and a sequence $0\equiv \mu_0<\mu_1<\mu_2<\dots<\mu_{n-1}$,
\begin{align*}
 &\P(\kappa\ge n,m_1=\mu_1,m_2=\mu_2,\dots,m_{n-1}=\mu_{n-1})
 \\ & =
 \E\left[
 \prod_{i=1}^{n-1} 
 e^{-\nu_{\delta_i}-a(\mu_{i-1}+1)}
 \prod_{x=\mu_{i-1}+1}^{\mu_{i}} 
 \left(1-e^{-\nu_{\delta_i}-ax}\right)
 \right]
 \\ & =
 \E\left[
 \prod_{i=1}^{n-1} \left(
 e^{-(\psi_{i-1}(\mu)+a(\mu_{i-1}+1))}
 \prod_{x=\mu_{i-1}+1}^{\mu_{i}} 
 \left(1-e^{-(\psi_{i-1}(\mu)+ax)}\right)\right)
 \right]
 \\ & =
 \E\left[\left(
 \prod_{i=1}^{n-1} 
 e^{-(\psi_{i-1}(\mu)+a(\mu_{i-1}+1))}
 \right)
 \prod_{x=1}^{\mu_{n-1}} 
 \left( 1-e^{ -(ax+\psi_{\min\{k:\ x\le \mu_k\}}(\mu)) } \right)
 \right].
 \end{align*}
 Thus
\begin{align*}
\P(\kappa>n)=
&\E\left[
\sum_{\mu_1=1}^\infty\sum_{\mu_2=\mu_1+1}^\infty\dots
\sum_{\mu_{n-1}=\mu_{n-2}+1}^\infty
\left(
 \prod_{i=1}^{n-1}  e^{-(\psi_{i-1}(\mu)+a(\mu_{i-1}+1))}
 \right)
 \right.\\ & 
 \qquad \left.
 \times \prod_{x=1}^{\mu_{n-1}} 
 \left(
 1-e^{ -(ax+\psi_{\min\{k:\ x\le \mu_k\}}(\mu)) }
 \right)
 \right]
\end{align*}
which expresses the probability in terms of random variables $\hat\tau_x$ whose distribution can be obtained explicitly from Lemma~1.1 in~\cite{Stas}, and then by using~\eqref{eq:psi}.

\subsection{Non-identically distributed  $\Delta$'s}
In this section, we modify the model to allow $\Delta_x$, $x=0,1,2,\dots$, to have different distributions, which may depend on the location.
\begin{thm} 
Let $c\le 1$. Assume that either 
\begin{itemize}
    \item[(a)] $\Delta_x\le c/x$ \ for all large $x$, or
    \item[(b)] for some $C>0$ we have $|\Delta_x|\le C$ for all~$x$,  $\sum_x Var(\Delta_x)<\infty$, $\left|\sum_x \left[\E\Delta_x-\frac{c}x\right]\right|<\infty$.
\end{itemize}
Then the infinite fire does not occur a.s. i.e., $\P(n_k<\infty\text{ for all }k )=1$.
\\[2mm]
Conversely, if $c>1$, and either 
\begin{itemize}
    \item[(c)] $\Delta_x> c/x$ \ for all large $x$, or
    \item[(d)] $\E \Delta_x=\frac {c+o(1)}x$  and $\displaystyle \P\left(\lim_{x\to\infty}\, |\Delta_x|\sqrt{x}=0\right)=1$.
\end{itemize}
Then the infinite fire occurs a.s.
\end{thm}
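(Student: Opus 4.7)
The strategy is to analyse the fire's evolution through the successive maxima $m_k$ from the proof of Theorem~\ref{t:infirexists}. If the $(k{+}1)$-st frontier first reaches $m_k+1$ at time $t^*$, then, conditionally on $\F_k$, the probability that this fire extends to infinity equals
\[
Q(m_k,t^*)\;=\;\prod_{i=1}^{\infty}\bigl(1-e^{-t^*-S_i^{(m_k)}}\bigr),
\qquad S_i^{(m)}:=\sum_{j=1}^{i-1}\Delta_{m+j,1},
\]
which, by the two-sided product inequality~\eqref{eq:bound}, is positive iff the random series $\Sigma(m):=\sum_{i\ge 1}e^{-S_i^{(m)}}$ is finite, and obeys the one-sided estimate $Q(m,t)\ge 1-e^{-t}\,\Sigma(m)$. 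The whole theorem thus reduces to whether the partial sums $S_n^{(m)}$ grow faster or slower than $\log n$.

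For the non-existence side I would show $\Sigma(m)=\infty$ a.s.\ for every $m$. In case (a), the deterministic bound $\Delta_{m+j,1}\le c/(m+j)$ gives $S_i^{(m)}\le c\log((m+i)/m)+O(1)$, whence $e^{-S_i^{(m)}}\ge C(m/(m+i))^c$, a series diverging for $c\le 1$. In case (b), Kolmogorov's one-series theorem (applicable thanks to $\sum_x\mathrm{Var}(\Delta_x)<\infty$ and $|\Delta_x|\le C$) together with the mean hypothesis $\sum_x(\E\Delta_x-c/x)=O(1)$ yields $S_n=c\log n+O(1)$ a.s., and the same divergence follows. Hence $Q(m_k,t^*)=0$ a.s.\ at every successive-maximum stage, so no infinite fire can arise.

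For the existence side the same computations run in reverse. In case (c), $\Delta_x>c/x$ with $c>1$ yields $\Sigma(m)\le Cm$ a.s., since $\sum_{i\ge 1}(m/(m+i))^c\le m/(c-1)+O(1)$. In case (d), I would truncate $\tilde\Delta_x:=\Delta_x\mathbf{1}_{\{\Delta_x\le 1/\sqrt{x}\}}$; the second Borel--Cantelli lemma applied to the independent events $\{\Delta_x>1/\sqrt{x}\}$ (whose ``i.o.'' probability vanishes by the a.s.\ hypothesis) gives $\sum_x\P(\Delta_x>1/\sqrt{x})<\infty$, hence $\mathrm{Var}(\tilde\Delta_x)\le \E\Delta_x/\sqrt{x}=O(x^{-3/2})$, and Kolmogorov delivers $S_n=(c+o(1))\log n$ a.s., so again $\Sigma(m)\le Cm$. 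The very same partial-sum estimates applied to the $\Delta$'s actually carried by the fire from $0$ to $m_k$ give $t^*\ge c\log m_k+O(1)$, hence $e^{-t^*}\Sigma(m_k)\to 0$ and $Q(m_k,t^*)\to 1$ as $k\to\infty$. The conditional Borel--Cantelli argument from the proof of Theorem~\ref{t:infirexists} then forces an infinite fire a.s.

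The main obstacle is case (d): converting the pure a.s.\ statement $\Delta_x\sqrt{x}\to 0$ into a genuine variance-summability bound strong enough for Kolmogorov, while verifying that the truncation does not distort $\E\Delta_x$ by more than $o(1/x)$ per term. A secondary technical point is that the $\Delta_{x,i}$'s actually carried by the fire on its way from $0$ to $m_k$ have unpredictable indices $i_x$; however, since all the $\Delta_{x,i}$ at a fixed site are i.i.d., they obey the same log-type partial-sum estimates as the first copies $\Delta_{x,1}$, so this step is routine.
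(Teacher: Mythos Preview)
Your non-existence argument (cases (a) and (b)) is essentially the paper's: both show that the product $q_x$ vanishes because $e^{-t_x-S_{x,n}}$ is of order $(x/n)^c$, which is not summable for $c\le 1$, with Kolmogorov's three-series theorem supplying the a.s.\ estimate in case~(b).

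For the existence half your route differs from the paper's, and in case~(d) the step you call ``routine'' is a genuine gap. You need $t^*\ge(c-\epsilon)\log m_k$ for all large $k$ almost surely, where $t^*$ is bounded below by the accumulated delay $\sum_{x\le m_k}\Delta_x^{(\zeta)}$ of the random fire $\zeta=\zeta_{m_k+1}$ that first reaches $m_k+1$. A Kolmogorov estimate gives, for each \emph{fixed} fire index $j$, that $\sum_{x<n}\Delta_x^{(j)}\ge(c-\epsilon)\log n$ once $n\ge N_j(\omega)$; but the random threshold $N_j$ may grow with $j$, while $\zeta_{m_k+1}\to\infty$ as $k\to\infty$, so you cannot pass to the diagonal. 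That the $\Delta_{x,i}$ at a fixed site are i.i.d.\ yields equality in law, not an a.s.\ bound uniform over the infinitely many fire indices involved. The paper circumvents this by working with summable probability bounds rather than a.s.\ partial sums: it shows directly that $\P(A_n)$ is summable, where $A_n=\{$first fire to reach $n$ fails to ignite $n{+}1\}$, using Hoeffding's inequality to obtain $\P\bigl(\sum_{x<n}\Delta_x^{(j)}<(c-\delta)\log n\bigr)\le n^{-2}$ uniformly in $j$, and then proving the correlation inequality $\P(D_{n,j}\mid\zeta_n=j)\le\P(D_{n,j})$ --- conditioning on fire $j$ being the one that reaches $n$ can only bias its delays \emph{upward}. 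Your scheme would need both a quantitative tail bound of this kind and an explicit treatment of this selection effect; without them the conditional Borel--Cantelli argument does not close. (Case~(c) does go through in your framework, since there the lower bound $\Delta_x>c/x$ is deterministic and no uniformity-over-fires issue arises.)
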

\begin{proof} Let $c\le 1$. We will only consider case (b), as case (a) is similar (and easier).
Suppose the fire reaches some site $x\ge 1$  for the first time at time $t_x$. Conditioned on this time $t_x$, the probability that this fire will continue indefinitely is
$$
q_x:=\prod_{n=x+1}^\infty \left(1-e^{-(t_x+\Delta_{x}+\Delta_{x+1}+\dots+\Delta_{n-1})}\right)
=\prod_{n=x+1}^\infty \left(1-e^{-(t_x+S_{x,n})}\right)
$$
as it takes $S_{x,n}:=\Delta_{x}+\Delta_{x+1}+\dots+\Delta_{n-1}$ units of time for the fire to reach site $n$ and we need to ensure that already there is a tree at this location.  Let $\xi_x=\Delta_x-\frac{c}x$. Then $\sum_x \xi_x$ converges a.s.\ by Kolmogorov's three-series theorem. 
 We have
$$
S_{x,n}=\sum_{i=x}^{n-1}\left(\xi_i+\frac{c}i\right)
=c\log\frac{n}{x}+o_x(1)+\sum_{i=x}^{n-1}\xi_i=c\log\frac{n}{x}+o_x(1)
$$
where $o_x(1)\to 0$ a.s.\ as $x\to\infty$.
As a result, 
 $$
 e^{-t_x-S_{x,n}}=e^{-t_x-o_x(1)}\,\left(\frac{x}{n}\right)^c
 $$ 
 which is not summable in $n\ge 1$ as $c\le 1$, yielding $q_x=0$.

\vspace{5mm}
\textbf{Now assume  $c>1$.} We will consider only the harder case (d). Let $\Delta_x^{(i)}$ be the (random) spread time from site $x$ to site $x+1$,  for the fire number $i=1,2,\dots$ started at the origin.  Note that some of the $\Delta_x^{(i)}$s will be irrelevant for the understanding of the process: for example, if the first fire reaches only site $1$, and the second fire reaches site $3$, then $\Delta_1^{(1)},\Delta_2^{(1)}$ will be irrelevant for the behaviour of the process, but $\Delta_1^{(2)},\Delta_2^{(2)}$ {\em will be} relevant.

Let $\zeta_n\in\{1,2,\dots\}$ be the index of the fire that burns $n$ for the very first time. Pick a small $\delta>0$ such that~$c-\delta>1$,  and let
\begin{align*}
D_{n,i}&:=\left\{\sum_{x=0}^{n-1} \Delta_x^{(i)}<(c-\delta)\log n\right\},
\\
A_n&:=\{\text{the first fire which reaches the $n-$th tree does not spread to the $(n+1)^{st}$ tree}\}
\end{align*}
Note that in these notations $\left(D_{n,\zeta_n}\right)^c=\{$the fire which reaches the $n$-th tree for the first time takes more than $(c-\delta)\log n$ units of time from its start$\}$.

We have $0\le \Delta_x\le C_x$ a.s., where  $C_x\le \frac\d {\sqrt {2x}}$ for all large $x$, yielding
$$
\sum_{x=1}^{n-1} C_x^2\le \d^2\log n
$$
for large $n$. Since  also
$$
\sum_{x=1}^n \E\Delta_x =(c+o(1))\log(n)
$$
by Hoeffding's inequality,
$$
\P\left(D_{n,i}\right)\le \exp\left(-\frac{2\delta^2\log^2 n }{\d^2\log n}\right)=\frac1{n^2}.
$$

To prove the existence of an infinite fire, it suffices to show that $A_n$ happens only {\em finitely often}, almost surely.  We have
\begin{align}\label{eqAn}
\begin{split}
 \P(A_n)&=\P(A_n\cap D_{n,\zeta_n}^c)+\P(A_n\cap D_{n,\zeta_n})
\leq \P\left(A_n\mid D_{n,\zeta_n}^c\right)+\P\left(D_{n,\zeta_n}\right)
\\&
=\sum_{i=1}^\infty \left[\P\left(A_n\mid D_{n,i}^c,\zeta_n=i\right)
+\P\left(D_{n,i}\mid \zeta_n=i\right)\right]
\times \P(\zeta_n=i)\\&
\leq
\sum_{i=1}^\infty
\left[\P(\text{no tree at $(n+1)$ upto time }(c-\delta)\log n)+\P(D_{n,i}\mid \zeta_n=i)\right]
\times \P(\zeta_n=i)
\\&=\frac{1}{n^{c-\delta}}+
\sum_{i=1}^\infty
\P\left(D_{n,i}\mid \zeta_n=i\right)\times\P(\zeta_n=i) 
\leq\frac{1}{n^{c-\delta}}+\frac{1}{n^2}
\end{split}
\end{align}
which holds as long as we can show that
\begin{align}\label{eqFin}
\P\left(D_{n,i}\mid \zeta_n=i\right) \leq \P\left(D_{n,i}\right).
\end{align}
Let us show that this is indeed the case. Observe that $\{\zeta_n=i\}=X\cap Y$, where
\begin{align*}
X&:=\{\text{the first $i-1$ fires do not reach site }n\},\\
Y&:=\{\text{the $i$-th fire reaches site }n\}.
\end{align*}
Note also that $X$ and $D_{n,i}$ are independent since the distributions of the first $(i-1)$ fires are not influenced by the $\{\Delta_0^{(i)},\Delta_1^{(i)},\dots,\Delta_{n-1}^{(i)}\}$,  and $\P(Y)\geq \P\left(Y\mid D_{n,i}\right)$,  since the probability of the $i$-th fire reaching $n$ increases whenever any of $\Delta_x^{(i)}$ in the above set increases. As a result,
\begin{align*}
\P\left(X\cap Y\mid D_{n,i}\right)
&=\P\left(X\mid Y,D_{n,i}\right)\, \P\left(Y\mid D_{n,i}\right)
=\P(X\mid Y)\, \P\left(Y\mid D_{n,i}\right)
\\ &
\leq \P(X\mid Y)\, \P(Y)=\P(X\cap Y)    
\end{align*}
which together  with the Bayes formula
\begin{align*}
\P\left(D_{n,i}\mid \zeta_n=i\right)
=\frac{\P\left(\zeta_n=i\mid D_{n,i}\right)\times \P\left(D_{n,i}\right)}
{\P\left(\zeta_n=i\right)}
\end{align*}
yields \eqref{eqFin}.

Finally, by~\eqref{eqAn}, since $c-\delta>1$, the probabilities $\P(A_n)$  are summable over $n$, and by the Borel-Cantelli lemma, $A_n$ occurs finitely often, almost surely. As a result, there cannot be infinitely many successive maxima, implying that an infinite fire a.s.\ exists.
\end{proof}

\section{Instant spread, non-zero burning times.}\label{secIS} 
Throughout this Section we assume that $\Delta\equiv 0$ but $\theta\ge 0$.
\begin{thm}\label{t:tau0}
Suppose that $\Delta\equiv 0$. Then a.s.\ there are no infinite fires.
\end{thm}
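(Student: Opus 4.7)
The plan is to show that every one of the countably many fires starting at the origin is almost surely finite, and then conclude by a union bound over $k$. Fix $k\ge 1$ and consider the $k$-th fire, starting at site $0$ at time $\nu_k$. Because $\Delta\equiv 0$, this fire propagates instantaneously through any site occupied at its moment of arrival, while at a vacant site it must wait for a fresh tree to appear; by the memoryless property this wait is $\mathrm{Exp}(1)$, and the fire survives only if the wait is strictly less than the burning time $\theta$ of the preceding site.

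The first step is to produce an infinite sequence of sites that are vacant at time $\nu_k$. Proceeding by induction on $k$, we may assume that the rightmost site $N_{k-1}$ ever burnt by the previous fires is a.s.\ finite. For every $x>N_{k-1}$ the event $\{x$ is vacant at $\nu_k\}$ reduces to $\{A_x>\nu_k\}$, where $A_x\sim\mathrm{Exp}(1)$ is the first tree-arrival at $x$; these events are independent in $x$ with conditional probability $e^{-\nu_k}>0$ given $\nu_k$, so by Borel--Cantelli there are a.s.\ infinitely many vacant sites $y_1<y_2<\cdots$, and the corresponding post-$\nu_k$ waiting times $A'_{y_j}$ for a fresh tree at $y_j$ are i.i.d.\ $\mathrm{Exp}(1)$ by the memoryless property.

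Because the fire crosses every run of occupied sites between consecutive $y_i$'s in zero time, a short induction shows that, as long as it has not yet failed, the fire arrives at $y_j$ at time $\nu_k+S_{j-1}$, where $S_{j-1}:=\max(0,A'_{y_1},\dots,A'_{y_{j-1}})$. Consequently the fire can fail only at a \emph{record index} $j$, i.e.\ one with $A'_{y_j}>S_{j-1}$; at such an index the wait $A'_{y_j}-S_{j-1}$ is $\mathrm{Exp}(1)$ by the memoryless property and independent of the past, while the relevant burning time $\theta_{y_j-1,\cdot}$ is a fresh independent copy of $\theta$ because the sites $y_j-1$ are distinct and have not been exposed in this form before. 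Hence, conditional on survival up to the $\ell$-th record, the fire survives it with the constant probability $p:=1-\E[e^{-\theta}]<1$ (here $\E[e^{-\theta}]>0$ since $\theta<\infty$ a.s.). Since i.i.d.\ continuous random variables a.s.\ produce infinitely many records, the total survival probability is dominated by $p^L$ for every $L$ and so equals $0$; a union bound over $k$ finishes the argument. The main technical obstacle is the filtration book-keeping needed to verify that at each record the wait and the corresponding $\theta$ are conditionally independent of everything before, which rests on the memoryless property of $\mathrm{Exp}(1)$ and on the freshness of the $\theta_{y_j-1,\cdot}$.
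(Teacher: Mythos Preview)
Your proof is correct and takes essentially the same approach as the paper: both observe that each time the advancing fire encounters a site that is vacant at its moment of arrival, it fails to cross with probability $\E e^{-\theta}>0$, independently of the past, and hence is a.s.\ finite. Your record-index formalism is just a more explicit bookkeeping of these waiting points than the paper's terse iteration; the only minor slip is that the fire may reach $y_1-1$ strictly after time $\nu_k$ owing to possible delays inside $[0,N_{k-1}]$, but a later arrival only shifts the running maximum upward, still leaves infinitely many record indices among the $A'_{y_j}$, and keeps the independent Bernoulli$(1-\E e^{-\theta})$ survival trials intact.
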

\begin{proof}
Recall that $f_{x,i}$ denotes the time when the site $x\in\Z_+$ is burnt for the $i$th time. Then at time $t=f_{x,1}$ there may already be trees at sites $x+1,x+2,\dots$ and the fire will spread to those sites immediately (as $\Delta=0$); however, since the probability of not having a single tree during time $s$ is $e^{-t}$ and these events are independent for each site $y> x$, with probability one there will be a site $y\ge x+1$ such that there are no trees there; we let $y$ be the leftmost such site. This means that 
$$
f_{x,1}=f_{x+1,1}=\dots=f_{y-1,1}<f_{y,1}.
$$
For this fire still to reach $y$, there must be a tree ``planted" at $y$ during the time interval $[t,t+\theta_{y-1,1}]$. Conditionally on $\theta_{y-1,1}$, this probability equals $1-e^{-\theta_{y-1,1}}$, so every time the fire spreads immediately from some site $x$ to $y-1$, with probability $p_*:=\E e^{-\theta}>0$, independently of the past, it will not spread further. Since this probability does not depend on $x$ or $s$ either, it means that a.s.\ the fire will eventually stop.
\end{proof}

\vspace{1cm}
Now we want to study some further properties of the fire process. Recall that $m_k$ denotes the location of the tree, farthest from the origin, burnt by the $k$-th fire. 
 
\begin{lemma}\label{Lemma 2}
Assume $\Delta\equiv 0$ and fix some $\epsilon>0$. The time $f_{m_i,1}$ when the fire reaches $m_i$ for the first time is less than $(1+\epsilon)\log m_i$ for all but finitely many $i$, almost surely.
\end{lemma}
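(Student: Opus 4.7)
The plan is to apply Borel--Cantelli along the deterministic sequence of sites $n\ge 1$, rather than along the random indices $i$. Fix $\epsilon>0$ and set $L_n:=(1+\epsilon)\log n$. For each $n\ge 1$ define
$$
A_n:=\{n\in\{m_j:j\ge 1\}\}\cap\{f_{n,1}>L_n\}.
$$
The event that $f_{m_i,1}>(1+\epsilon)\log m_i$ for infinitely many $i$ is exactly $\{A_n\text{ for infinitely many }n\}$, so it suffices to prove $\sum_{n\ge 1}\P(A_n)<\infty$.

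The key step, which uses $\Delta\equiv 0$ crucially, is a clean characterisation of $\{n\in\{m_j\}\}$. Since any fire reaching $n+1$ must first traverse $n$, before time $f_{n,1}$ nothing at site $n+1$ has interacted with the dynamics at sites $\{0,\dots,n\}$. In particular, the original tree-appearance time $T_{n+1}\sim\exp(1)$ at $n+1$ is independent of the $\sigma$-algebra $\mathcal{G}_n$ generated by the history at sites $\{0,\dots,n\}$ up through the interval $[f_{n,1},f_{n,1}+\theta_{n,1}]$. Because $\Delta\equiv 0$, the first fire reaching $n$ propagates to $n+1$ iff $T_{n+1}\le f_{n,1}+\theta_{n,1}$, so
$$
\P\bigl(n\in\{m_j\}\bigm|\mathcal{G}_n\bigr)
=\P\bigl(T_{n+1}>f_{n,1}+\theta_{n,1}\bigm|\mathcal{G}_n\bigr)
=e^{-(f_{n,1}+\theta_{n,1})}.
$$

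Combining with $\{f_{n,1}>L_n\}\in\mathcal{G}_n$ and $\theta_{n,1}\ge 0$ yields
$$
\P(A_n)=\E\!\left[\mathbf{1}_{\{f_{n,1}>L_n\}}\,e^{-(f_{n,1}+\theta_{n,1})}\right]\le e^{-L_n}=n^{-(1+\epsilon)},
$$
which is summable; Borel--Cantelli completes the argument. The only delicate point I would verify carefully is the conditional independence of $T_{n+1}$ from $\mathcal{G}_n$: this holds because fires propagate strictly left to right, so the restriction of the process to sites $\{0,\dots,n\}$ never queries the Poisson clock of the original tree at $n+1$ before time $f_{n,1}$, and $\theta_{n,1}$ is drawn from an independent copy of $\theta$ at the moment $n$ first starts burning.
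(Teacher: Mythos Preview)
Your proof is correct and follows essentially the same approach as the paper: define the event $A_n$ that $n$ is a successive maximum with $f_{n,1}>(1+\epsilon)\log n$, bound $\P(A_n)$ by $n^{-(1+\epsilon)}$ using that the first tree-appearance time at $n+1$ is an independent $\exp(1)$ variable, and apply Borel--Cantelli. Your write-up is in fact slightly more careful than the paper's, as you track the $\theta_{n,1}$ term explicitly and spell out the conditional-independence justification, whereas the paper simply bounds the non-spreading probability by $e^{-t}$ directly.
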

\begin{proof}
Suppose that the fire reaches site $n$ at time $t=f_{n,1}$. The probability that there is no tree at $n+1$ at this time is $e^{-t}$, so in case $t\ge (1+\epsilon)\log n$, the probability of not spreading the fire further is less than 
$$
e^{-(1+\epsilon)\log n}=\frac1{n^{1+\epsilon}}
$$
Hence $\P(A_n)\le \frac1{n^{1+\epsilon}}$ where
$$
A_n=\{n\text{ is a local maximum  and }f_{n,1}>(1+\epsilon)\log n)\}
$$
so by the Borel-Cantelli lemma, $A_n$ occurs only for finitely many $n$s a.s. From this, the statement follows.
\end{proof}

The next statement is similar to Proposition~2 in~\cite{Comets}.
\begin{lemma}\label{Lemma 3}
Assume $\Delta\equiv 0$ and fix some $\epsilon>0$.  Let
$$
F_n=F_n(\epsilon)=\{f_{n,1}\ge (1-\epsilon)\log n\}.
$$
Then 
$$
\P\left(F_n^c\right)<e^{-n^\epsilon}
$$  
and hence the event $F_n^c$ a.s.\ occurs only finitely often.
\end{lemma}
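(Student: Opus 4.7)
The plan is to reduce the event $F_n^c=\{f_{n,1}<(1-\epsilon)\log n\}$ to a simple event about $n$ i.i.d.\ exponential random variables. First I would note a necessary condition: since $\Delta\equiv 0$, a fire ignited at the origin at some time $\nu_k$ propagates only through an uninterrupted chain of trees, so for any such fire to reach site $n$ every site $x\in\{1,2,\dots,n\}$ must already carry a tree at time $\nu_k$. Letting $T_x$ denote the time at which the very first tree appears at site $x$, the model specifies that the $T_x$'s are i.i.d.\ $\exp(1)$ across $x$ (all sites start vacant and grow trees independently at rate $1$). Consequently, the event $\{f_{n,1}\le t\}$ is contained in $\{T_x\le t\text{ for all }x=1,\dots,n\}$.

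Next, by independence, for $t=(1-\epsilon)\log n$,
$$
\P(F_n^c)\le \P\bigl(T_1\le t,\dots,T_n\le t\bigr)=\bigl(1-e^{-t}\bigr)^n=\bigl(1-n^{-(1-\epsilon)}\bigr)^n.
$$
Applying the elementary bound $(1-y)^n<e^{-ny}$ with $y=n^{-(1-\epsilon)}$ immediately gives $\P(F_n^c)<e^{-n^\epsilon}$, which is exactly the stated inequality. Since $\sum_{n\ge 1}e^{-n^\epsilon}<\infty$ for any $\epsilon>0$, the first Borel--Cantelli lemma yields $\P(F_n^c\text{ i.o.})=0$, i.e.\ $F_n^c$ occurs only finitely often almost surely.

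I do not anticipate any substantial obstacle: the proof is essentially a one-step reduction. The only point that requires a moment of care is the rigorous justification of the necessary condition, namely that $f_{n,1}\le t$ forces $T_x\le t$ for every $x\le n$. This follows directly from the instantaneous spread: if site $n$ has ever been ignited by time $t$, the igniting fire reached each intermediate site $x$ through a tree present at the moment of propagation, so the first tree at site $x$ must have appeared no later than $t$. Everything else is a standard application of independence of the $T_x$'s, the inequality $(1-y)^n<e^{-ny}$, and Borel--Cantelli.
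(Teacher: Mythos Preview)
Your proof is correct and is essentially identical to the paper's: both reduce $\{f_{n,1}<(1-\epsilon)\log n\}$ to the event that the first tree at every site $x\le n$ has appeared by that time, giving the bound $(1-n^{-(1-\epsilon)})^n<e^{-n^{\epsilon}}$ and then invoking Borel--Cantelli. One small caveat: your opening sentence (``every site $x$ must already carry a tree at time $\nu_k$'') is not quite right when $\theta>0$, since the fire can wait at a burning site for the next tree to appear; however, the weaker inclusion $\{f_{n,1}\le t\}\subseteq\{T_x\le t\ \forall x\le n\}$ that you actually use---and correctly justify in your final paragraph---is exactly what is needed, and is also what the paper uses.
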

\begin{proof}
In order to reach $n$ by time $t$, there should be a tree at each location $x\in [0,n]$ by that time (or even earlier, because $\theta>0$), so
$$
\P(f_{n,1}<(1-\epsilon)\log n)\le \left(1-e^{(1-\epsilon)\log n}\right)^{n}=\left[\left(1-\frac1{n^{1-\epsilon}}\right)^{n^{1-\epsilon}}\right]^{n^\epsilon}
<e^{-n^\epsilon}.
$$  
The second statement follows from the Borel-Cantelli lemma.
\end{proof}

Lemmas~\ref{Lemma 2} and~\ref{Lemma 3} give a tight bound on $f_{m_i,1}$, since by Lemma~\ref{Lemma 3}, that $f_{m_i,1}<(1-\epsilon)\log m_i$, only finitely often almost surely.
\\[5mm]

Recall that $f_{x,2}$ is the time when the site $x$ is burnt for the second time. We believe that the following holds, even though we cannot prove it.
\begin{conj}\label{conj1}
There is a $c\ge 1$, such that a.s.\ there exists a (possibly random) $\kappa$ such 
\begin{align}\label{eqconj}
f_{m_i,2}\le c\log m_i\quad \text{ for all }i\ge \kappa.
\end{align}
\end{conj}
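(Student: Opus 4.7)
The plan is a Borel--Cantelli argument paralleling Lemma~\ref{Lemma 2}, applied to
\[
B_n=\{n\text{ is a successive maximum and }f_{n,2}>c\log n\},
\]
to conclude that $B_n$ occurs only finitely often, almost surely. The starting observation is that, because $\Delta\equiv 0$ and $\theta\ge 0$ forbid the fire from skipping sites, the fire which first ignites $m_{i+1}$ must pass through---and hence burn for at least a second time---the previous successive maximum $m_i$. Therefore
\[
f_{m_i,2}\le f_{m_{i+1},1},
\]
and Lemma~\ref{Lemma 2} applied at the $(i+1)$-st successive maximum yields $f_{m_{i+1},1}\le (1+\epsilon)\log m_{i+1}$ eventually, a.s. The Conjecture therefore reduces to showing that $\log m_{i+1}/\log m_i$ is bounded by some constant $K$ for all but finitely many $i$; in that case \eqref{eqconj} holds with $c=K(1+\epsilon)$.

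To bound $\log m_{i+1}/\log m_i$, I would try to bound the inter-maximum time $\delta_i:=f_{m_{i+1},1}-f_{m_i,1}$ by $O(\log m_i)$: the two-sided asymptotics $f_{m_i,1}\asymp \log m_i$, which follow from Lemmas~\ref{Lemma 2} and~\ref{Lemma 3} applied to the diverging sequence $\{m_i\}$, would then give $\log m_{i+1}\le \frac{(1+\epsilon)\log m_i+\delta_i}{1-\epsilon}=O(\log m_i)$. The heuristic for the bound on $\delta_i$ is as follows: in a window $W$ of length $K'\log m_i$ starting at $f_{m_i,1}$, the origin fires at least once with overwhelming probability (the inter-fire gap is $\xi+\theta_{0,\cdot}$), and by the end of $W$ each of the sites in $\{1,\dots,m_i+1\}$ has had enough time to regrow a tree; the ensuing origin fire then propagates past $m_i$, producing a new successive maximum, so $\delta_i\le K'\log m_i$.

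The main obstacle is making this heuristic rigorous. Between $f_{m_i,1}$ and the end of $W$ the intermediate sites $1,\dots,m_i-1$ can be repeatedly re-ignited by ``short'' fires that do not extend past $m_i$; each such burn ties up the affected site for an additional $\theta+\mathrm{Exp}(1)$ time during which no tree is present. To handle this I would (a) bound, with probability $\ge 1-m_i^{-\alpha}$, the number of origin fires during $W$ by $O(\log m_i)$, limiting the number of such disturbances; (b) show that the cumulative tree-absent time at any individual site is at most $\tfrac12 K'\log m_i$, giving a regrowth probability $\ge 1-m_i^{-\beta}$; (c) union-bound over the $\le m_i$ sites to conclude that, with probability $\ge 1-m_i^{-(1+\alpha)}$, every site carries a tree at the moment the next origin fire arrives. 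This would give $\P(B_n)\le n^{-(1+\alpha)}$, and the Borel--Cantelli lemma then concludes the proof. The delicate point---and the reason the statement remains a conjecture---is handling the intricate correlations between the sites' burn/regrowth histories, which might additionally require some tail assumption on $\theta$ (e.g.\ $\E\theta<\infty$) to make the origin-fire-rate argument quantitative.
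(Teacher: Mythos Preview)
This statement is labeled a \emph{Conjecture} in the paper, and the authors say explicitly that they cannot prove it; there is no proof in the paper to compare your proposal against. What you have written is therefore an attack on an open problem, and it should be assessed as such.

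Your reduction is correct and useful. Since $\Delta\equiv 0$, the fire that first ignites $m_{i+1}$ must burn $m_i$ on its way, and this is at least the second burn of $m_i$; hence $f_{m_i,2}\le f_{m_{i+1},1}$. Combined with Lemmas~\ref{Lemma 2} and~\ref{Lemma 3}, the Conjecture is then equivalent to the bound $\log m_{i+1}=O(\log m_i)$. This equivalence is implicit in the paper: Lemma~\ref{lem_mlam}(b) derives exactly that bound on $m_{i+1}$ \emph{from} the Conjecture, and Theorem~\ref{Theorem 6} (explicitly conditional on the Conjecture) uses it. So you have correctly located the crux.

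Your sketch (a)--(c), however, does not close the gap, and you are right to flag it as the obstacle. Even granting (a) and (b), step~(c) requires that at the moment a particular origin fire departs, \emph{all} of the sites $1,\dots,m_i+1$ simultaneously carry a tree. Bounding the cumulative tree-absent time at each site does not control this: a site can be occupied for $99\%$ of the window yet vacant at the relevant instant, and the vacancy pattern at site $x$ is determined by the fire history, which is highly correlated across sites. Moreover, bounding the number of origin fires by $O(\log m_i)$ cuts both ways---it limits the disturbances, but it also limits the number of attempts the process gets to find a fully occupied configuration. Making any version of this work would require a quantitative estimate on the joint occupation law of $\{1,\dots,m_i\}$ at a stopping time, which is exactly what is unavailable. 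Finally, note that from Theorem~\ref{Theorem 6} onward the paper takes $\theta>0$ to be a fixed constant, so a tail hypothesis on $\theta$ is not the missing ingredient.
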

The motivation for this conjecture is as follows. First of all, we already know that $(1-\epsilon)\log m_i<f_{m_i,1}<(1+\epsilon)\log m_i$, for all large $i$ almost surely, regardless of the value of $\theta$. In the case where $\theta\equiv0$, the difference $f_{m_i,2}-f_{m_i,1}$ has the same distribution as $f_{m_i,1}$. Since altering $\theta$ does not have any effect on the order of $f_{m_i}$, we suspect it does not have any substantial effect on $f_{m_i,2}-f_{m_i,1}$ either. Later, we show that the number of ``jumps'' (continuous stretches of fire, please see the proof of part~(b) of Lemma~\ref{lem_mlam} for the exact definition) in the fire reaching $m_i$ for the first time is of order $\log i$, which is quite low compared to the order of~$m_i$ that exceeds $\exp\{e^{\alpha i}\}$, as we show later. This indicates that the path (using the graphic representation language) of the first fire reaching $m_i$ is not ``very bumpy'', and thus it might behave more like a flat path (instantaneous burn). Hence, while we do not claim $f_{m_i,2}-f_{m_i,1}$ equals $(1+o(1)\log m_i$, we have a strong reason to believe that it is {\em at least} of order $\log m_i$.

\begin{thm}\label{Theorem 6}
Suppose that $\Delta=0$ and $\theta>0$ is constant.
Assume that Conjecture~\ref{conj1} is true.    
Then  $f_{n,1} = \mathcal{O}(\log n)$, i.e.\ $\exists$ $M>0$ such that $f_{n,1}\leq M\cdot \log n$ for large $n$.
\end{thm}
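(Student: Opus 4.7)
The plan is to reduce the theorem to a bound on the growth rate of the successive maxima $m_i$, and then use Conjecture~\ref{conj1} together with a careful analysis of the fire's propagation past $m_{i-1}$.

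For the reduction: for each large $n$, let $i=i(n)$ be the unique index with $n\in(m_{i-1},m_i]$. The first burn at $n$ is caused by the fire $F_i$ (the first fire extending beyond $m_{i-1}$), and so $f_{n,1}\le f_{m_i,1}\le(1+\epsilon)\log m_i$ by Lemma~\ref{Lemma 2}. Since $n>m_{i-1}$, it suffices to prove that $\log m_i\le K\log m_{i-1}$ for some constant $K$ and all large $i$ almost surely.

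I would then decompose $f_{m_i,1}=T_i+U_i$, where $T_i$ is the time $F_i$ burns $m_{i-1}$ and $U_i$ is the additional time for $F_i$ to reach $m_i$ from $m_{i-1}$. For $T_i$: by memorylessness, once a tree appears at $m_{i-1}+1$ at time $f_{m_{i-1},1}+\theta+W$ with $W\sim\exp(1)$, every fire at $m_{i-1}$ occurring after that moment must extend further; thus $F_i$ is the first such fire at $m_{i-1}$. In the ``typical'' case where $f_{m_{i-1},2}$ exceeds this threshold, we have $T_i=f_{m_{i-1},2}\le c\log m_{i-1}$ by Conjecture~\ref{conj1}. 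For $U_i$ and for $m_i-m_{i-1}$: a direct computation shows that, at each site the fire reaches, given that the next site is vacant at arrival (the only case where there is a potential delay or stop), the fire stops with conditional probability exactly $e^{-\theta}$, independent of the arrival time. Hence the number $J_i$ of non-trivial jumps of $F_i$ beyond $m_{i-1}$ is stochastically dominated by a Geometric$(e^{-\theta})$ variable and satisfies $J_i=O(\log i)$ a.s.\ by Borel--Cantelli. Between jumps, the number of sites the fire covers instantly is Geometric with success probability $e^{-(T_i+k\theta)}$, so summing yields $m_i-m_{i-1}\le e^{T_i}\cdot e^{O(J_i\theta)} = O\bigl(m_{i-1}^{c}\cdot i^{O(1)}\bigr)$; using $\log i=o(\log m_{i-1})$ (from the doubly exponential growth of $m_i$ established elsewhere in the paper), this yields $\log m_i\le(c+o(1))\log m_{i-1}$, completing the reduction.

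The main obstacle is to handle the ``atypical'' event where the 2nd fire at $m_{i-1}$ arrives just before the tree at $m_{i-1}+1$ has had a chance to regrow; in that event $F_i$ is some later fire at $m_{i-1}$ and Conjecture~\ref{conj1} alone does not directly control $T_i$. Resolving this requires either a lower bound of order $\log m_{i-1}$ on $f_{m_{i-1},2}-f_{m_{i-1},1}$ (heuristically expected from the motivation for the Conjecture) so that the atypical event has probability $\le m_{i-1}^{-1}$ and Borel--Cantelli applies, or an extension of the Conjecture giving comparable control on $f_{m_{i-1},r}$ for small $r\ge 3$. This delicate coupling between the memoryless regrowth at $m_{i-1}+1$ and the conjectured bound on $f_{m_{i-1},2}$ is the crux of the argument.
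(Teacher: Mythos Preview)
Your approach is essentially the same as the paper's. The paper has already packaged the key step—showing $m_{i+1}\le m_i^{c+\epsilon}$ a.s.\ for all large $i$ under Conjecture~\ref{conj1}—as part~(b) of Lemma~\ref{lem_mlam}, so the paper's proof of Theorem~\ref{Theorem 6} is a two-line sandwich: for $n\in[m_i,m_{i+1}]$ one has $f_{n,1}\le f_{m_{i+1},1}\le(1+\epsilon)\log m_{i+1}\le(1+\epsilon)(c+\epsilon)\log m_i\le(1+\epsilon)(c+\epsilon)\log n$. Your proposal re-derives the content of Lemma~\ref{lem_mlam}(b) inline, using the same ingredients: the geometrically distributed number of jumps (the paper's $N_i$, with $N_i\le c_1\log i$ eventually by Borel--Cantelli) and a coupling of the stretch lengths $L_{i,j}$ with geometrics of rate $m_i^{-c}e^{-\theta(j-1)}$ via Conjecture~\ref{conj1}. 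Where you invoke the doubly-exponential growth of $m_i$ to absorb the $i^{O(1)}$ factor, the paper uses only the polynomial lower bound $m_i\ge i^r$ from part~(a) of the same lemma, which already suffices.

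The ``main obstacle'' you flag—that the first fire extending beyond $m_{i-1}$ need not be the second fire at $m_{i-1}$, so Conjecture~\ref{conj1} does not directly bound $T_i$—is genuine, and the paper does not address it either. In the proof of Lemma~\ref{lem_mlam}(b) the authors simply assert that ``after $j-1$ jumps the time is at most $f_{m_i,2}+\theta(j-1)$'', tacitly identifying the extending fire with the second fire at $m_i$. So you have not missed a trick the paper uses; you have been more careful about a point the paper glosses over. Your suggested repair (a lower bound of order $\log m_{i-1}$ on $f_{m_{i-1},2}-f_{m_{i-1},1}$, making the atypical event summable) is natural but indeed goes beyond what Conjecture~\ref{conj1} asserts.
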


We start with the auxiliary statement.
\begin{lemma}\label{lem_mlam}
Suppose that $\Delta=0$ and $\theta>0$ is constant.
\begin{itemize}
    \item[(a)] Let $r>0$. Then a.s.\ $m_i\ge i^r$ for all but finitely many $i$.
    \item[(b)] Assume that Conjecture~\ref{conj1} is true. Let $\epsilon>0$ and $c$ be the constant from~\eqref{eqconj}.  Then a.s.\  $m_{i+1} \le  {m_i}^{c+\epsilon}$ for all but finitely many~$i$.
\end{itemize}
\end{lemma}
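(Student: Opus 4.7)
The plan is to combine Lemma~\ref{Lemma 2} with a linear-in-$i$ lower bound on the origin-burn times $\nu_i$. Since $\theta$ is a positive constant, \eqref{eq:nus} gives $\nu_i=(i-1)\theta+\sum_{j=1}^i\xi_j$ with i.i.d.\ $\xi_j\sim\exp(1)$, so by the strong law of large numbers $\nu_i\ge c_1 i$ eventually for some $c_1>0$, almost surely. The $i$-th successive maximum is reached by a fire at the origin of some index $j(i)\ge i$, so $f_{m_i,1}\ge \nu_{j(i)}\ge \nu_i\ge c_1 i$ for large $i$. Combining with Lemma~\ref{Lemma 2} gives $c_1 i<(1+\epsilon)\log m_i$ eventually, hence $m_i\ge\exp\{c_1 i/(1+\epsilon)\}$, which dominates $i^r$ for any fixed $r>0$.

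\textbf{Part (b), setup.} Fix $\epsilon>0$. By Conjecture~\ref{conj1}, $s_i:=f_{m_i,2}\le c\log m_i$ eventually; and since $s_i\ge f_{m_i,1}+\theta$, Lemma~\ref{Lemma 3} further gives $s_i\ge\tfrac{1}{2}\log m_i$ eventually. Let $\F_i$ be the $\sigma$-algebra of events up to time $s_i$. Because $m_i$ is the $i$-th successive maximum, sites $m_i+1,m_i+2,\dots$ have never caught fire by time $s_i$, so conditional on $\F_i$ their tree-appearance times $T_1,T_2,\dots$ are i.i.d.\ $\exp(1)$ and independent of $\F_i$. The second fire at $m_i$ fails to extend past $m_i$ iff $T_1>s_i+\theta$, which has conditional probability $e^{-s_i-\theta}\le e^{-\theta}m_i^{-1/2}$; this is a.s.\ summable in $i$ by part~(a), and conditional Borel--Cantelli yields that the second fire at $m_i$ does extend past $m_i$ for all but finitely many $i$. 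On such \emph{good} $i$, $m_{i+1}=m_i+L$ where
\[
L=\inf\{j\ge 1:T_j>M_{j-1}+\theta\},\qquad M_j=\max(s_i,T_1,\dots,T_j).
\]

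\textbf{Tail of $L$ and conclusion.} The remaining core step is a bound $\P(L>m_i^{c+\epsilon/2}\mid\F_i)\le Cm_i^{-\alpha}$ for some $\alpha=\alpha(\theta,\epsilon)>0$. Conditional on $\F_i$ and $T_1,\dots,T_{l-1}$, step $l$ of the extension fails with probability $e^{-M_{l-1}-\theta}$, so by $1-x\le e^{-x}$,
\[
\P(L>J\mid\F_i)\le \E\!\left[\exp\!\left(-\sum_{l=1}^{J-1}e^{-M_{l-1}-\theta}\right)\,\middle|\,\F_i\right].
\]
A direct computation using the distribution of the maximum of $l-1$ i.i.d.\ exponentials shows $\E[e^{-M_{l-1}}\mid\F_i]$ is of order $\min(e^{-s_i},1/l)$; splitting the sum at $l=e^{s_i}$ yields $\E[\sum_{l=1}^{J-1}e^{-M_{l-1}-\theta}\mid\F_i]\ge c_\theta\log(J/e^{s_i})$ for $J\ge 2e^{s_i}$ and some $c_\theta>0$. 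Coupled with a concentration argument for this monotone random sum, this produces $\P(L>J\mid\F_i)\le C(J/e^{s_i})^{-\alpha}$. Substituting $J=m_i^{c+\epsilon/2}$ and $e^{s_i}\le m_i^c$ yields the desired summable tail bound; conditional Borel--Cantelli then gives $L\le m_i^{c+\epsilon/2}$ for all but finitely many good $i$, so $m_{i+1}\le m_i+m_i^{c+\epsilon/2}\le m_i^{c+\epsilon}$ for large $m_i$.

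\textbf{Main obstacle.} The delicate step is the concentration argument for the monotone random sum $\sum e^{-M_{l-1}-\theta}$: a naive Jensen inequality bounds $\E[\exp(-\sum)]$ in the wrong direction. I plan to address this via classical Gumbel-type tail bounds for the running maximum of i.i.d.\ exponentials, showing that $M_l$ stays within $O(1)$ of $\max(s_i,\log l)$ with sufficiently high probability for the sum to concentrate near its expected value.
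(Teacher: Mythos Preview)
Your argument is correct and in fact cleaner than the paper's. The paper does not use the origin clock at all: it instead shows that, conditional on the history up to the first burn of $m_i$, the event $B_i=\{m_{i+1}\ge m_i+m_i^{1-\epsilon}\}$ has probability bounded below by some $c'>0$ (via a short argument about the second fire at $m_i$ and a geometric overshoot), and then deduces $m_i^\epsilon$ grows at least linearly in $i$. Your observation that the origin fire achieving $m_i$ has index $\ge i$, together with $\nu_i\sim(\theta+1)i$ and Lemma~\ref{Lemma 2}, gives $m_i\ge e^{c_1 i/(1+\epsilon)}$ directly. This is both shorter and stronger (exponential rather than polynomial).

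\textbf{Part (b).} Here there is a genuine gap. Your displayed inequality
\[
\P(L>J\mid\F_i)\le \E\!\left[\exp\!\Big(-\sum_{l=1}^{J-1}e^{-M_{l-1}-\theta}\Big)\,\middle|\,\F_i\right]
\]
does not follow from $1-x\le e^{-x}$: the factors $(1-e^{-M_{l-1}-\theta})$ are not independent of the indicator $1_{L>l-1}$ through which you must iterate, so the product does not telescope into an unconditional expectation over i.i.d.\ $T_l$'s. More seriously, the ``Gumbel-type'' plan for the running maximum is aimed at the wrong distribution: on $\{L>J\}$ each $T_l$ is constrained to lie in $(0,M_{l-1}+\theta]$, so the sequence is no longer i.i.d.\ $\exp(1)$, and the unconditional record statistics $M_l\approx\log l$ need not describe the conditioned process. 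In particular, the only deterministic information $\{L>l\}$ gives is $M_l\le s_i+l\theta$, and plugging that into $\sum e^{-M_{l-1}-\theta}$ yields a \emph{bounded} geometric series, which gives no decay in $J$.

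The paper sidesteps this entirely via a structural decomposition. Write $m_{i+1}-m_i=\sum_{j=1}^{N_i}L_{i,j}$ where $L_{i,j}$ is the length of the $j$-th stretch of sites burnt at the same instant and $N_i$ is the number of such stretches (``jumps''). The point is that at the end of each stretch the fire survives iff a tree appears at the next site within time $\theta$, independently of everything else, so $N_i$ is genuinely Geometric$(e^{-\theta})$; hence $N_i\le c_1\log i$ except on a summable event. Under Conjecture~\ref{conj1}, after $j-1$ jumps the running time is at most $c\log m_i+(j-1)\theta$, so each $L_{i,j}$ is dominated by a geometric with parameter $m_i^{-c}e^{-(j-1)\theta}$, and a union bound over $j\le c_1\log i$ gives $L_{i,j}\le m_i^{c+\epsilon/2}$ except on a summable event. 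This replaces your concentration step with two elementary tail bounds on geometric variables and closes the argument.
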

\begin{proof}
(a) The statement is trivial for $r\le 1$, since $m_i\ge i$, so from now on assume that $r>1$. Let $0<\epsilon<\frac1r$. Suppose that the fire has reached $m_i$ at time $t:=f_{m_i,1}$; w.l.o.g.\ we can assume that $t$ is sufficiently large.

\setlength{\unitlength}{1cm}
\begin{figure}
    \centering  
\begin{picture}(8,5)
  \thinlines 
  \put(0,0.4){\vector(1,0){8}}  
  \put(2,0.4){\line(0,1){4}}    
  \put(4,0.43){\line(0,1){4}}    
  \put(6,0.4){\line(0,1){4}}  
\put(0.7,1.3){$t'$}
\put(0.0,2.3){$t'+\theta$}
\put(0.7,1.8){$t$}
\put(0.0,2.9){$t+\theta$}
\put(0.0,3.9){$t+2\theta$}
\multiput(1, 2)(0.2, 0){15}{\line(1,0){0.1}} 
\multiput(1, 3)(0.2, 0){25}{\line(1,0){0.1}} 
\multiput(1.2, 4)(0.2, 0){24}{\line(1,0){0.1}} 
\multiput(1, 1.4)(0.2, 0){5}{\line(1,0){0.1}} 
\multiput(1, 2.4)(0.2, 0){5}{\line(1,0){0.1}} 
\thicklines 
\put(2.02,1.4){\line(0,1){1}}    
\put(1.98,1.4){\line(0,1){1}}    
\put(4.02,2){\line(0,1){1}}    
\put(3.98,2){\line(0,1){1}}    
\put(5.4,0){$m_i+1$}
\put(3.8,0){$m_i$}
\put(6,3.5){\circle*{0.15}}
\put(4,2){\circle*{0.15}}
\end{picture}
    \caption{how the second fire reaching $m_i$ can spread very far.}
    \label{fig3}
\end{figure}
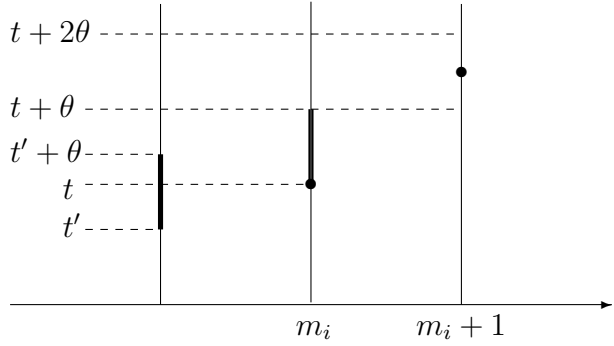

Since the fire has successfully spread from $m_i-1$ to $m_i$, this fire had reached $m_i-1$ at time~$t'\in( t-\theta,t]$, and thus with probability at least $e^{-2\theta}$ there will be no new tree at $m_i-1$ during the time interval $[t'+\theta,t+2\theta]$ (as $t'+\theta$ is the time when the tree at $m_i-1$ stops burning). Under this event, the {\em second fire} to reach $m_i$ cannot happen before time $t+2\theta$, which, in turn, provides the site $m_i+1$ with at least $\theta$ units of time to grow a tree (there was none during the time $[t,t+\theta]$ since we know that $m_i$ was the local maximum by that time); please see Figure~\ref{fig3}.
Hence, with probability at least
\begin{align}\label{eq3theta}
e^{-2\theta}\cdot \left(1-e^{-\theta}\right)    
\end{align}
$f_{m_i+1,1}\in[f_{m_i,2}, f_{m_i,2}+\theta)$, i.e., the second fire to reach $m_i$ will also reach $m_i+1$. However, by that time, there can already be the whole stretch of trees at sites $m_i+2,m_i+3,\dots$; in fact, the number $L_i$ of consecutive sites which do have a tree by this time has a geometric distribution with parameter $p=e^{-f_{m_i+1,1}}<e^{-t}$. Let $\tilde \G^*$ be the sigma-algebra containing all the events which happened by time $f_{m_i+1,1}$ to the left of $m_i+1$. Then
\begin{align}\label{eqLip}
\P\left(L_i\ge \frac 1p\mid \G^*\right)=\left(1-p\right)^{\lceil 1/p\rceil}=e^{-1}+o(1)    
\end{align}
where $o(1)\to 0$ as $p\to 0$ (i.e., $t\to \infty$). Let
\begin{align}\label{eqB}
B_i:=\left\{m_{i+1}\ge m_i+m_i^{1-\epsilon}\right\}
=\{L_i\geq m_i^{1-\epsilon}\}.
\end{align}
If event $F_{m_i}$ from Lemma~\ref{Lemma 3} occurs, then $t=f_{m_i,1}\ge (1-\epsilon)\log m_i$ yielding $1/p> m_i^{1-\epsilon}$. Taking into account the bound of probability~\eqref{eq3theta} and~\eqref{eqLip}, we obtain that 
\begin{align}\label{eqmi}
\P(B_i\mid  \G^*)\ge \left[ 
    e^{-2\theta-1}\cdot \left(1-e^{-\theta}\right)+o(1)\right] \times 1_{F_{m_i}}\ge c'\times 1_{F_{m_i}}
\end{align}
for some $c'>0$.

Let $\mu_i=m_i^\epsilon$. From Taylor expansion, we obtain that on $B_i$ we have 
$$
\mu_{i+1}-\mu_i\ge \epsilon+O(m_i^{-\epsilon})\ge \epsilon/2
$$
for sufficiently large $i$; also, trivially, $\mu_{i+1}\ge \mu_i$.
Consequently, we can create a sequence of i.i.d.\ Bernoulli$(c')$ random variables $\xi_i$, $i=1,2,\dots$, such that $\mu_{i+1}-\mu_i$ is stochastically larger than~$\frac{\epsilon}2\, \xi_i\, 1_{F_{m_i}}$. By the strong law, $\frac1n\sum_{i=1}^n\xi_i\to c'$ a.s., and at the same time
$$
0\le \frac1n\sum_{i=1}^n\xi_i-\frac1n\sum_{i=1}^n\xi_i\, 1_{F_{m_i}}\le \frac{|\{i:\ F_{m_i}\text{ did not occur}\}|}n
$$
By Lemma~\ref{Lemma 3}, the quantity in the numerator is a.s.\ finite and does not depend on $n$, hence
$$
\lim_{n\to\infty} \frac1n\sum_{i=1}^n\xi_i\, 1_{F_{m_i}}=c'\quad\text{a.s.}
$$
This, in turn, leads to
$$
\liminf_{i\to\infty} \frac{\mu_i}i\ge \frac{c'\,\epsilon}2\qquad \text{a.s.}
$$
yielding 
$$
m_i\ge  \left(\frac{c' \epsilon}3 i\right)^{1/\epsilon}\ge i^r\qquad\text{a.s.}
$$
for all sufficiently large $i$, since $1/\epsilon>r$.
${}$
\\[5mm]
(b) Consider the first fire that reaches beyond $m_i$ (the one which occurred at time $f_{m_i+1,1}$). This fire will extend further to the right in several so-called ``jumps'', rigorously defined in the next two paragraphs, until it eventually stops somewhere. The number of these jumps will be denoted by $N_i\in\{1,2,\dots\}$.

Let $L_{i,1}$ be the length of the maximum stretch of sites to the right of $m_i$ burned at the same time as $m_i+1$, so that $f_{m_i+y,1}=f_{m_i+1,1}$ for $y=2,3,\dots,L_{i,1}$. After the fire reaches site $m_i+L_{i,1}$, and $m_i+L_{i,1}+1$ is empty at that time (since we know that the fire did not immediately go further) there are two possibilities: either with probability $e^{-\theta}$ within the next~$\theta$ units of time no tree appears at the site $m_i+L_{i,1}+1$, or such a tree does appear. 

In the first case, we set $N_i=1$ and stop. In the latter case, we say that the fire has {\bf a jump} at position $m_i+L_{i,1}+1$, meaning that the fire, whilst burning both $m_i+L_{i,1}$  and $m_i+L_{i,1}+1$,  reached the latter site with a delay (up to $\theta$; $0<f_{m_i+L_{i,1}+1,1}-f_{m_i+L_{i,1},1}<\theta$). In case of such a jump, the fire will immediately spread to the additional~$L_{i,2}$ sites to the right of $m_i+L_{i,1}$, and thus $f_{m_i+L_{i,1}+y,1}=f_{m_i+L_{i,1}+1,1}$ for $y=2,3,\dots,L_{i,2}$, but there is no tree at the site $m_i+L_{i,1}+L_{i,2}+1$ at this time.

Similarly, we can define $L_{i,3},L_{i,4},\dots,L_{i,N_i}$, where~$N_i$ is the number of continuous stretches of sites that were burnt at the same time (before the fire eventually stopped completely);  $N_i$~is the number of such jumps. As a result, $N_i$ is a geometric random variable with rate~$e^{-\theta}$, and~the site $\left(m_i+\sum_{j=1}^k L_{i,j}\right)+1$ is burnt slightly later than the site $m_i+\sum_{j=1}^k L_{i,j}$ for each $k=1,2,\dots,N_i-1$, with the delay of up to~$\theta$. 

From the construction,
\begin{align}\label{eqxi}
m_{i+1}-m_i=\sum_{j=1}^{N_i} L_{i,j}.
\end{align}
Note also that $N_i$ is typically not very large, formally,  for some sufficiently large $c_1>0$
\begin{align}\label{eq:Ci}
\P(C_i^c)\le \left(1-e^{-\theta}\right)^{\lfloor c_1\log i\rfloor}\le \frac1{i^2}
\quad \text{ where }C_i:=\{N_i\le   c_1\log i\}
\end{align}
which implies by the Borel-Cantelli lemma that a.s.\ $N_i\le c_1\log i$ for all but finitely many $i$s.

Our goal now is to find an upper bound on how far the fire, that passed beyond $m_i$ for the first time, can travel (i.e., the first fire to burn $m_i+1$). Conjecture~\ref{conj1} postulates that the time to reach $m_i$ for {\em the second time} is at most $c\log m_i$ for all $i\ge \kappa$, and for these values of $i$ each $L_{i,j}$  can be coupled with $\xi_{i,j}$, $j=1,2,\dots$, where $\xi_{i,j}$ are independent geometric random variables with rate $e^{-(\theta(j-1)+c\log m_i) }=m_i^{-c} e^{-\theta (j-1)}$, respectively, such that $L_{i,j}\le \xi_{i,j}$. This coupling results from the fact that under this conjecture, the time to reach $m_i$ for the second time  $f_{m_i,2}\le c\log m_i$ and then after $j-1$ jumps the time is at most $\Delta:=f_{m_i,2}+\theta(j-1) \le c\log m_i+\theta(j-1)$, and the probability of not having a single tree by this time is $e^{-\Delta}$.

 Together with~\eqref{eqxi} this yields
\begin{align}\label{eqxii}
\left(m_{i+1}- m_i\right) 1_{i\ge\kappa} \preccurlyeq \sum_{j=1}^{N_i} \xi_{i,j}
\end{align}
where $X \preccurlyeq Y$ means that $Y$ stochastically dominates $X$.
Let 
$$
\mathcal{B}_i:=\left\{\sum_{j=1}^{N_i}\xi_{i,j}\ge m_i^{c+\epsilon/2}\times c_1\log i\right\}\supseteq 
\left\{\sum_{j=1}^{N_i}\xi_{i,j}\ge m_i^{c+\epsilon}-m_i\right\}
$$
for large $i$ (the inclusion follows from part (a)).
Then, using~\eqref{eq:Ci}, for $i\ge \kappa$
\begin{align}\label{eq:B}
\begin{split}
\P\left(\mathcal{B}_i\mid \F_{f_{m_i+1,1}}\right)&\le
\P\left(\mathcal{B}_i\mid N_i\le c_1 \log i\mid \F_{f_{m_i+1,1}}\right)
+
\P\left(N_i>c_1 \log i\mid \F_{f_{m_i+1,1}}\right)
\\ & \le 
\P\left( \xi_{i,j}\ge m_i^{c+\epsilon/2} \text{ for some }j=1,2,\dots,\lfloor c_1\log i\rfloor\right)
+\frac1{i^2}
\\ & \le 
c_1\log i \times \max_{j\le c_1\log i}\P\left( \xi_{i,j}\ge m_i^{c+\epsilon/2} \right)
+\frac1{i^2}
\end{split}
\end{align}
Since
\begin{align*}
\P\left(\xi_{i,j}> m_i^{c+\epsilon/2}\right)
\le \left(1-\frac1{m_i^c e^{\theta (j-1)}}\right)^{m_i^{c+\epsilon/2}}
\le \exp\left(-e^{-\theta(j-1)}\, m_i^{\epsilon/2}\right)
\le \exp\left(-\frac{m_i^{\epsilon/2}}{i^{c_1\theta}}\right)
\end{align*}
as $j\le c_1\log i$, and by part (a) of the lemma $m_i$ grows faster than $i^r$ for any $r>0$, the RHS of~\eqref{eq:B} is summable over $i$, and thus the events $\left\{\sum_{j=1}^{N_i}\xi_{i,j}\ge m_i^{c+\epsilon}-m_i\right\}$ happen finitely often by the Borel-Cantelli lemma. As a result, from~\eqref{eqxii}
$$
m_{i+1}\le  m_i^{c+\epsilon}
$$ 
for all large $i$ a.s.
\end{proof}

\begin{proof}[Proof of Theorem~\ref{Theorem 6}]
Fix some $\epsilon>0$. Lemma~\ref{lem_mlam} implies that $\log m_{i+1}\le (1+\epsilon) \log m_i$ for all large~$i$, a.s. On the other hand, by Lemma~\ref{Lemma 2}, $f_{m_{i+1},1}\le (1+\epsilon) \log m_{i+1}$.  Hence, for all $n \in \{m_i,\dots,m_{i+1}\}$ we have 
$$
f_{n,1}\leq f_{m_{i+1},1}\le   (1+\epsilon)^2\log{m_i}\le  (1+\epsilon)^2 \log{n}.
$$
Together with Lemma~\ref{Lemma 3}, this implies $f_{n,1} =\mathcal{O}(\log n)$.
\end{proof}
 
The next statement strengthens part (a) of Lemma~\ref{lem_mlam} ({\em without} using Conjecture~\ref{conj1}).
\begin{thm}
There exists an $\alpha>0$ such that $m_i\ge \exp(\exp(\alpha i))$ for all large $i$ a.s.
\end{thm}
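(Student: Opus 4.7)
The plan is to show that $\tau_i := f_{m_i,1}$ grows at least exponentially in $i$; combined with Lemma~\ref{Lemma 2}, which gives $\log m_i \ge \tau_i/(1+\epsilon)$, this would immediately yield $m_i \ge \exp(\exp(\alpha i))$ for some $\alpha > 0$.

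The central step will be to prove a ``shifted'' analog of Lemma~\ref{Lemma 3}: after $\tau_i$, any subsequent fire reaching $m_i$ must wait essentially $\log m_i$ additional time for the forest to regrow. The key observation is that for every $x \in [m_i/2, m_i]$, Lemma~\ref{Lemma 3} gives $t_x \ge f_{x,1} \ge (1-\epsilon)\log x \ge (1-\epsilon)\log m_i - O(1)$ (for $x$ beyond some random threshold $X_0$), whereas Lemma~\ref{Lemma 2} gives $\tau_i \le (1+\epsilon)\log m_i$; together these imply $\tau_i - t_x \le 3\epsilon \log m_i$ for all large $i$. Hence the probability that $x$ has a tree by time $\tau_i + s$ is at most $1 - e^{-(s - \theta + 3\epsilon\log m_i)}$, and by the conditional independence of the regrowth clocks across sites,
\begin{equation*}
\P(\tau_{i+1}\le \tau_i+s\mid \F_{\tau_i}) \le \prod_{x=\lceil m_i/2\rceil}^{m_i}\P(\text{tree at }x\text{ by }\tau_i+s) \le \exp\!\Bigl(-\tfrac{m_i}{2}\,e^{\theta-(s+3\epsilon\log m_i)}\Bigr).
\end{equation*}

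Next, taking $s_i := (1-4\epsilon)\log m_i$ makes the right-hand side $\exp(-(e^\theta/2) m_i^{\epsilon})$. Since by Lemma~\ref{lem_mlam}(a) $m_i \ge i^r$ a.s.\ for any $r>0$, the series $\sum_i \exp(-(e^\theta/2) m_i^\epsilon)$ converges, so by Borel--Cantelli, $\tau_{i+1} \ge \tau_i + (1-4\epsilon)\log m_i$ for all but finitely many $i$ a.s. Combining once more with Lemma~\ref{Lemma 2} yields $\tau_{i+1}/\tau_i \ge c := 1 + (1-4\epsilon)/(1+\epsilon) > 1$ (for $\epsilon$ small), and iterating gives $\tau_i \ge c^{i-i_0}\tau_{i_0}$ eventually. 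Hence $\log m_i \ge \tau_i/(1+\epsilon) \ge C c^i$, and finally $m_i \ge \exp(\exp(\alpha i))$ for $\alpha := \log c > 0$.

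The hardest step will be rigorously justifying the product bound in the shifted Lemma~\ref{Lemma 3}: given $\F_{\tau_i}$, the event $\{$tree at $x$ by time $\tau_i+s\}$ depends only on the independent $\exp(1)$ regrowth clock at site $x$ after its last burn time $t_x$, so these events are conditionally independent and the product bound applies. A minor subtlety is that Lemma~\ref{Lemma 3} provides $f_{x,1}\ge (1-\epsilon)\log x$ only for $x\ge X_0(\omega)$; since $m_i/2 \to \infty$ a.s.\ by Lemma~\ref{lem_mlam}(a), for large $i$ the lower bound applies uniformly to all $x \in [m_i/2, m_i]$, absorbing the finitely many exceptional sites into an $O(1)$ correction.
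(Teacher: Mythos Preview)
Your argument has a genuine gap at exactly the step you flag as hardest. You write
\[
\P(\tau_{i+1}\le \tau_i+s\mid \F_{\tau_i}) \;\le\; \prod_{x=\lceil m_i/2\rceil}^{m_i}\P(\text{tree at }x\text{ by }\tau_i+s),
\]
but this mixes a conditional left-hand side with unconditional factors, and the inequality is false in general. Conditionally on $\F_{\tau_i}$ you already \emph{know}, for each $x$, whether the regrowth clock at $x$ fired before~$\tau_i$; if it did (a tree is present at $x$ at time~$\tau_i$), the conditional probability of ``tree at $x$ by $\tau_i+s$'' is~$1$, not $1-e^{-(s-\theta+3\epsilon\log m_i)}<1$. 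Only sites that are vacant or still burning at~$\tau_i$ contribute factors strictly below~$1$ to the correct conditional product. Your own estimate $\tau_i-t_x\le 3\epsilon\log m_i$ implies that a given site has regrown by time~$\tau_i$ with probability up to $1-e^{\theta}m_i^{-3\epsilon}\to 1$, so one should expect \emph{most} sites in $[m_i/2,m_i]$ to carry trees at~$\tau_i$; in the worst case only the final cluster of the $i$-th fire (those~$x$ with $t_x=\tau_i$) is guaranteed tree-free, and that cluster can have size as small as~$1$. The product therefore need not be small, and the summability you need is not established. Your one-line justification---``depends only on the independent $\exp(1)$ regrowth clock after $t_x$''---overlooks that this clock is censored by~$\F_{\tau_i}$.

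The paper sidesteps this by working on the \emph{fresh} side of the record: it looks at sites $x\in(m_i,m_{i+1})$, which have never been burnt, so the first-tree time at each such~$x$ is an honest $\exp(1)$ variable independent of~$\F_{f_{m_i,1}}$. Conditioning on $B_i=\{m_{i+1}-m_i\ge m_i^{1-\epsilon}\}$ (probability at least some $c'>0$, by the argument behind Lemma~\ref{lem_mlam}(a)) and on $C_i=\{N_i\le c_1\log i\}$ guarantees enough fresh sites and controlled jump count; the paper then shows $\P_*(\log\log m_{i+2}-\log\log m_i\ge\epsilon')\ge c''>0$ and finishes via a law of large numbers over coupled Bernoulli$(c'')$ increments. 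So the paper settles for ``increment large with uniformly positive probability'' rather than your intended ``increment large with summable failure probability''---a weaker per-step statement, but one that the available independence actually supports.
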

\begin{proof}
Fix some $\epsilon>0$, and denote $\P_*(\cdot)=\P\left(\cdot\mid \F_{f_{m_{i},1}}\right)$, the conditional probability measure based on the information available until the first time $m_i$ was burnt; thus $\F_{f_{m_{i},1}}$ is a {\em stopped sigma-algebra}.

First, we want to estimate the time difference between the second and the first fires which burnt the next maximum site $m_{i+1}$. Define
$$
A_i=\{f_{m_{i+1},2}-f_{m_{i+1},1}\ge 2\epsilon\log m_i\}
$$
and recall the definitions of $F_n$ from Lemma~\ref{Lemma 3}, $B_i$ from~\eqref{eqB} and $C_i$ from~\eqref{eq:Ci}; the event $C_i$ is independent of $\F_t$, while the event $F_{m_i}$ is $\F_t$-measurable.

On $F_{m_i}$, we have $f_{m_{i+1},1}>f_{m_i,1}\ge(1-\epsilon)\log m_i$. Let us now estimate $\P_*\left(A_i^c\mid  F_{m_i}\right)$. The fire which reached $m_{i+1}$ for the first time is also the one which reached sites $x\in (m_{i},m_{i+1}]$ for the first time. Recall that $N_i$ denotes the number of jumps of this fire on these positions; from the proof of part (b) of Lemma~\ref{lem_mlam} it follows that with probability $\ge 1-1/i^2$ the event~$C_i=\{N_{i}\le \lfloor c\log i\rfloor\}\subseteq  \{e^{\theta N_{i}}\le i^{c\theta}\}$ occurs (we are putting an upper bound on the number of jumps in a fire and this fact did not require Conjecture~\ref{conj1}).

For  $f_{m_{i+1},2}-f_{m_{i+1},1}$  to be smaller than $2\epsilon \log m_i$, at least one tree must appear during the time interval $[f_{m_{i+1},1}-\theta N_i,f_{m_{i+1},1}+2\epsilon \log m_i]$ at each location  $x\in [m_i,m_{i+1})$   (the term $(-\theta N_i)$ corresponds to the fact that there are up to~$N_i$ jumps among these positions), and the probability of this event is at most $\left(1-\frac{1}{m_i^{2\epsilon}\, e^{N_i\theta}}\right)$ for each of these sites, independently of the others.  Hence
\begin{align*}
\P_*(A_i^c\mid F_{m_i})&\le \P_*(\text{tree appears at all $x$ for } m_i<x\le m_{i+1}\mid F_{m_i})
\\
&\le
 \P_*(\text{tree appears at all $x$ for $m_i<x\le m_{i+1}$}\mid B_i,F_{m_i})+\P_*(B_i^c\mid F_{m_i})
\\
&\le
 \P_*(\text{tree appears at all $x$ for $m_i<x\le m_{i+1}$}\mid B_i,C_i,F_{m_i})+\P(C_i^c\mid F_{m_i})+\P_*(B_i^c\mid F_{m_i})
\\ &
 \le
\left(1-\frac{1}{m_i^{2\epsilon}\, e^{\theta c\log i}}\right)^{m_i^{1-\epsilon}}+\frac1{i^2}
+(1-c')
= \left(1-\frac{1}{m_i^{2\epsilon}\, i^{c\theta}}\right)^{m_i^{1-\epsilon}}
+\frac1{i^2}+1-c'
\\ &
\le 
 \exp\left\{-\frac{m_i^{1-3\epsilon}}{ i^{c\theta}}\right\}
 +1-2 c'' \qquad\text{for large }i
\end{align*}
for some $c''>0$, since on $B_i$ we have $m_{i+1}-m_i\ge m_i^{1-\epsilon}$, and using~\eqref{eqmi}.

The longer the time interval between the first and second fire at $m_{i+1}$ (as implied by the event $A_i$), the greater the probability that it will burn at least $m_i^{1+\epsilon/2}$ additional sites. Hence, if both $F_{m_i}$ and $A_i$ occur,  the probability that {\em the second fire to arrive at $m_{i+1}$} will also stop before immediately burning additional $m_i^{1+\epsilon/2}$ sites equals 
\begin{align}\label{eq:m:2m}
&\P_*(f_{m_{i+1}+k,1}> f_{m_{i+1},2}\text{ for some }k\in [1,m_i^{1+\epsilon/2}]\mid F_{m_i},A_i)
\nonumber\\
&\le 
\P_*(f_{m_{i+1}+1,1}> f_{m_{i+1},2}\mid F_{m_i},A_i)
+\sum_{k=2}^{m_i^{1+\epsilon/2}}\P_*(f_{m_{i+1}+k,1}> f_{m_{i+1},2}\mid F_{m_i},A_i)\nonumber
\\
&\qquad \le
\frac{e^{-\theta}}{m_i^{2\epsilon}}+
\frac{m_i^{1+\epsilon/2}}{m_i^{1+\epsilon}}
\le \frac{2}{m_i^{\epsilon/2}}
\end{align}
(when $m_i$ is large). The first term in the above inequality corresponds to the probability that a tree appeared at position $m_{i+1}+1$ during the time interval 
$$
[f_{m_{i+1},1}+\theta, f_{m_{i+1},2}]\supseteq [f_{m_{i+1},1}+\theta, f_{m_{i+1},1}+2\epsilon \log m_i],
$$
while the second term corresponds to the event that {\em at least one} of the locations $m_{i+1}+2,m_{i+1}+3,\dots,m_{i+1}+m_i^{1+\epsilon/2}$ remains vacant, and we know that the probability of such a site to remain vacant is $\exp\{-(1+\epsilon)\log m_i\}=1/m_i^{1+\epsilon}$. Since~$F_{m_i}$ is $\P_*$-measurable, using~\eqref{eq:m:2m} we have
\begin{align*}
\P_*(m_{i+2}- m_i\ge m_i^{1+\epsilon/2})&
\ge 
\P_*(m_{i+2}- m_i\ge m_i^{1+\epsilon/2}, A_i, F_{m_i})
\\ &
=
\P_*(m_{i+2}- m_i\ge m_i^{1+\epsilon/2}\mid A_i, F_{m_i})
\, \P_*( A_i\mid F_{m_i}) \, \P_*( F_{m_i})
\\ &
\ge \left(1-\frac2{m_i^{\epsilon/2}}\right)
\cdot\left(2c''-
\exp\left\{-\frac{m_i^{1-3\epsilon}}{2\, i^{c\theta}}\right\}
 \right)1_{F_{m_i}}\ge c''\cdot  1_{F_{m_i}} \cdot 1_{m_i>i^r}
\end{align*}
for some fixed non-random $r>1$ and all large $i$. 
Now the inequality $m_{i+2}- m_i\ge m_i^{1+\epsilon/2}$ implies
\begin{align}\label{eqcomplim}
\log m_{i+2}\ge \left(1+\frac{\epsilon}2\right)\log m_i\quad\Longrightarrow\quad
\log\log m_{i+2}-\log\log m_i\ge \log  \left(1+\frac{\epsilon}2\right)=:\epsilon'>0.  
\end{align}
Setting $i=2j$, $j=1,2,\dots$, we conclude that we can construct an i.i.d.\ sequence of Bernoulli($c''$) random variables $\xi_j$ such that
$$
\log\log m_{2(j+1)}-\log\log  {m_{2j}}\ge \epsilon' \xi_j\cdot 1_{F_{m_{2j}}} \cdot 1_{m_{2j}>(2j)^r}.
$$
By Lemma~\ref{lem_mlam} $m_i>i^r$ for any fixed $r$ and all sufficiently large $i$; we also know that the events~$F_{m_i}^c$ occur only finitely often a.s.\ yielding
\begin{align*}
 \liminf_{k\to\infty} \frac{\log\log m_{2k}}k
 &=\liminf_{k\to\infty} \frac{\sum_{j=1}^{k-1}\left(\log\log m_{2j+2}-\log\log m_{2j}\right)}k
 \\ &
 \ge \liminf_{k\to\infty} \frac{\sum_{j=1}^{k-1}\epsilon'\xi_j \cdot 1_{F_{m_{2j}}} \cdot 1_{m_{2j}>(2j)^r}}k
  =\epsilon'\lim_{k\to\infty} \frac{\sum_{j=1}^{k-1}\xi_j}k=\epsilon'c''>0\qquad \text{a.s.}
  \end{align*}
(please compare this with the proof of part (a) of Lemma~\ref{lem_mlam}) from which it follows that for some $\alpha>0$ we have $m_i\ge \exp\left\{e^{\alpha i}\right\}$ for all sufficiently large $i$ a.s.
\end{proof}
\begin{remark}
Inequality \eqref{eqcomplim}  shows  that for some $\epsilon''>0$ we have $m_{i+1}\ge m_i^{1+\epsilon''}$   for all large~$i$ a.s., thus complementing part~(b) of Lemma~\ref{lem_mlam} (conditional on Conjecture~\ref{conj1}).
\end{remark}


\end{document}